\newtheorem{theorem}{Theorem}
\newtheorem{corollary}[theorem]{Corollary}
\newtheorem{lemma}[theorem]{Lemma}
\newtheorem{definition}[theorem]{Definition}
\newtheorem{example}[theorem]{Example}
\newtheorem{remark}[theorem]{Remark}
\newtheorem{problem}[theorem]{Problem}
\newtheorem*{notation}{Notation}
\begin{document}

\title{Hight-order Noether's Theorem for Nonsmooth Extremals\\
of Isoperimetric Variational Problems with Time Delay}

\author{G. S. F. Frederico$^{1,2}$ and M. J. Lazo$^3$\\
\texttt{gastao.frederico@ua.pt} \texttt{}}

\date{\mbox{$^1$Department of Mathematics,}\\
Federal University of Santa Catarina,
Florianopolis, SC, Brazil\\[0.3cm]
    $^2$Department of Science and Technology,\\
University of Cape Verde, Praia, Santiago, Cape Verde\\[0.3cm]
$^3$Institute of Mathematics, Statistics and Physics,
Federal University of Rio Grande, Rio Grande, RS, Brazil\\}

\maketitle


\begin{abstract}
We obtain a nonsmooth higher-order extension of Noether's symmetry
theorem for variational isoperimetric problems with delayed
arguments. The result is proved to be valid in the class of
Lipschitz functions, as long as the delayed higher-order
Euler--Lagrange extremals are restricted to those that satisfy the
delayed higher-order DuBois--Reymond necessary optimality condition.
The important case of delayed isoperimetric optimal control problems
is considered as well.

\medskip

\noindent \textbf{Keywords:} time delays; invariance; symmetries;
isoperimetric conservation laws; DuBois--Reymond necessary
optimality condition; Noether's theorem; optimal control.

\medskip

\noindent \textbf{2010 Mathematics Subject Classification:} 49K05;
49S05.

\end{abstract}


\section{Introduction}

Noether's theorem, published in 1918 \cite{Noether:1918} ina seminal work, is a
central result of the calculus of variations that explains all
physical conservation laws based upon the action principle. It is a very general
result, asserting that ``to every variational symmetry of the
problem there corresponds a conservation law''. Noether's principle
gives powerful insights from the various transformations that make a
system invariant. For instance, in mechanics the invariance of a
physical system with respect to spatial translation gives
conservation of linear momentum; invariance with respect to rotation
gives conservation of angular momentum; and invariance with respect
to time translation gives conservation of energy \cite{MR2761345}.
As a consequence, the Noether's theorem is usually considered 
the most important mathematical theorem of the twentieth century for Physics.
On the other hand, the calculus of variations is now part of a more vast discipline,
called optimal control \cite{CD:MR29:3316b}, and Noether's principle
still holds in this more general setting \cite{ejc}.
 Within the years, this result has been studied by many 
 authors and generalized in different directions: see
\cite{Bartos,Gastao:PhD:thesis,GF:JMAA:07,GF2012,MR2323264,book:frac,NataliaNoether,ejc}
and references therein. In particular, in the recent paper
\cite{GF2013,{GFML2016}}, Noether's theorem was formulated for
variational problems with delayed arguments. The result is important
because problems with delays play a crucial role in the modeling of
real-life phenomena in various fields of applications \cite{ChiLoi,EFridman,GoKeMa}.
In order to prove Noether's theorem with delays, it was assumed that
admissible functions are $\mathcal{C}^2$-smooth and that Noether's
conserved quantity holds along all $\mathcal{C}^2$-extremals of the
Euler--Lagrange equations with time delay \cite{GF2012}. Here we
show that to formulate higher-order Noether's theorem with time
delays for nonsmooth functions, it is enough to restrict the set of
delayed isoperimetric higher-order Euler--Lagrange extremals to
those that satisfy the delayed isoperimetric higher-order
DuBois--Reymond condition. Moreover, we prove that this result can
be generalized to more general isoperimetric optimal control
problemas.

The text is organized as follows. In Section~\ref{sec:prelim} we
give a short review of the results for the fundamental isoperimetric
problem of variational calculus with delayed arguments. The main
contributions of the paper appear in Sections~\ref{sec:MRHO} and
\ref{sec:MRHO1}: we prove an Euler--Lagrange and DuBois--Reymond
optimality type conditions for nonsmooth higher-order isoperimetric
variational problems with delayed arguments
(Theorems~\ref{Thm:ELdeordm} and~\ref{theo:cDRifm}, respectively),
isoperimetric higher-order Noether symmetry theorem with time delay
for Lipschitz functions (Theorem~\ref{thm:Noether}) and a delayed
Noether's symmetry theorem (Theorem~\ref{thm:NT:OC}) for
isoperimetric optimal control problems. Two examples of application
are given in Section~\ref{exe}.


\section{Preliminaries}
\label{sec:prelim}

In this section we review necessary results on the calculus of
variations with time delay. For more on variational problems with
delayed arguments we refer the reader to
\cite{Basin:book,Bok,GoKeMa,DH:1968,Kra,Kharatishvili,Ros}.

We begin by defining the isoperimetric variational problem as in
\cite{GFML2016}.

\begin{problem}(The isoperimetric variational problem with time delay)
\label{Pb1} The isoperimetric  problem of the calculus of variations
consists of minimizing a functional
\begin{equation}
\label{Pe} J^{\tau}[q(\cdot)] = \int_{t_{1}}^{t_{2}}
L\left(t,q(t),\dot{q}(t),q(t-\tau),\dot{q}(t-\tau)\right) dt\,
\end{equation}
 subject to the isoperimetric equality constraints
\begin{equation}
\label{CT} I^{\tau}[q(\cdot)]=\int_{t_1}^{t_2}
g\left(t,q(t),\dot{q}(t),q(t-\tau),\dot{q}(t-\tau)\right)
dt=l,\,\,\,l\in\mathbb{R}^k\,,
\end{equation}
and boundary conditions \begin{equation} \label{Pe2}
q(t)=\delta(t)~\textnormal{ for
}~t\in[t_{1}-\tau,t_{1}]~\textnormal{ and }~q(t_2)=q_{t_2}.
\end{equation}.

We assume that $L,g :[t_1,t_2] \times \mathbb{R}^{4} \rightarrow
\mathbb{R}$, are a $\mathcal{C}^{2}$-functions
with respect to all their arguments, admissible functions $q(\cdot)$
are $\mathcal{C}^2$-smooth, $t_{1}< t_{2}$ are fixed in
$\mathbb{R}$, $\tau$ is a given positive real number such that
$\tau<t_{2}-t_{1}$, $l$ is a specified real constant and $\delta$ is
a given piecewise smooth function on $[t_1-\tau,t_1]$.
\end{problem}

 Throughout the text,
$\partial_{i}L$ denotes the partial derivative of $L$ with respect
to its $i$th argument. For convenience of notation, we introduce the
operator $[\cdot]_{\tau}$ defined by
$$
[q]_{\tau}(t)=(t,q(t),\dot{q}(t),q(t-\tau),\dot{q}(t-\tau)).
$$

The arguments of the calculus of variations assert that, by using the
Lagrange multiplier rule, Problem~\ref{Pb1} is equivalent to the
following augmented problem \cite[$\S12.1$]{CD:Gel:1963}: to
minimize
\begin{equation}
\label{agp}
\begin{split}
J^{\tau}[q(\cdot),\lambda]
&= \int_{t_1}^{t_2} F[q,\lambda]_{\tau}(t)dt\\
&:=\int_{t_1}^{t_2} \left[L[q]_{\tau}(t) -\lambda \cdot
g[q]_{\tau}(t)\right] dt
\end{split}
\end{equation}
subject to \eqref{Pe2}, where
$[q,\lambda]_{\tau}(t)=(t,q(t),\dot{q}(t),q(t-\tau),\dot{q}(t-\tau),\lambda)$.

 The augmented Lagrangian
\begin{equation}
\label{eq:aug:Lag} F:=L-\lambda \cdot g,\,\lambda\in\mathbb{R}
\end{equation} has an important role in our study.

The notion of extremizer (a local minimizer or a local maximizer)
can to be found in \cite{CD:Gel:1963}. Extremizers can be classified
as normal or abnormal.

\begin{definition}\label{def:extr}
An extremizer of Problem~\ref{Pb1} that does not satisfy the
Euler--Lagrange equations
\begin{equation}
\label{EL11}
\begin{cases}
\frac{d}{dt}\left\{\partial_{3}g[q]_{\tau}(t)+
\partial_{5}g[q]_{\tau}(t+\tau)\right\}
=\partial_{2}g[q]_{\tau}(t)+\partial_{4}g[q]_{\tau}(t+\tau),
\quad t_{1}\leq t\leq t_{2}-\tau,\\
\frac{d}{dt}\partial_{3}g[q]_{\tau}(t) =\partial_{2}g[q]_{\tau}(t),
\quad t_{2}-\tau\leq t\leq t_{2}.
\end{cases}
\end{equation}
is said to be a normal extremizer; otherwise (i.e., if it satisfies
\eqref{EL11} for all $t\in[t_1,t_2]$), is said to be abnormal.
\end{definition}

The following theorem gives a necessary condition for $q(\cdot)$ to
be a solution of  Problem~\ref{Pb1} under the assumption that
$q(\cdot)$ is a normal extremizer.

\begin{theorem}
\label{Thm:FractELeq1} If $q(\cdot)\in
C^2\left([t_{1}-\tau,t_{2}]\right)$ is a normal
extremizer to Problem~\ref{Pb1}, then it satisfies the following
\emph{isoperimetric Euler--Lagrange equation with time delay}:
\begin{equation}
\label{eq:eldf11}
\begin{cases}
\frac{d}{dt}\left\{\partial_{3}F[q,\lambda]_{\tau}(t)+
\partial_{5}F[q,\lambda]_{\tau}(t+\tau)\right\}\\
=\partial_{2}F[q,\lambda]_{\tau}(t)+\partial_{4}F[q,\lambda]_{\tau}(t+\tau),
\quad t_{1}\leq t\leq t_{2}-\tau,\\
\frac{d}{dt}\partial_{3}F[q,\lambda]_{\tau}(t)
=\partial_{2}F[q,\lambda]_{\tau}(t), \quad t_{2}-\tau\leq t\leq
t_{2}\,,
\end{cases}
\end{equation}
$t \in [t_1,t_2]$, where $F$ is the augmented Lagrangian
\eqref{eq:aug:Lag} associated with Problem~\ref{Pb1}.
\end{theorem}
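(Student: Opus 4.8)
The plan is to prove the isoperimetric Euler--Lagrange equation with time delay via the standard Lagrange multiplier technique combined with the fundamental lemma of the calculus of variations adapted to delayed arguments.

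The plan is to prove the theorem by the classical Lagrange multiplier technique: reduce the constrained problem to an unconstrained one governed by the augmented Lagrangian $F = L - \lambda\cdot g$, and then derive the delayed Euler--Lagrange equation from the vanishing of the first variation. First I would embed $q(\cdot)$ into a two-parameter family of admissible competitors
\begin{equation*}
\hat q(t) = q(t) + \varepsilon_1 \eta_1(t) + \varepsilon_2 \eta_2(t),
\end{equation*}
where $\eta_1,\eta_2 \in C^2$ vanish on $[t_1-\tau,t_1]$ and at $t=t_2$, so that $\hat q$ respects the boundary conditions \eqref{Pe2}. Writing $\hat J(\varepsilon_1,\varepsilon_2) = J^\tau[\hat q(\cdot)]$ and $\hat I(\varepsilon_1,\varepsilon_2) = I^\tau[\hat q(\cdot)]$, the pair $(0,0)$ is then a local minimizer of $\hat J$ subject to the finite-dimensional constraint $\hat I(\varepsilon_1,\varepsilon_2) = l$.

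The role of the normality hypothesis is to guarantee that the constraint is nondegenerate. Since $q(\cdot)$ does not satisfy the Euler--Lagrange equations \eqref{EL11} for $g$, the first variation $\delta I^\tau[q;\cdot]$ does not vanish identically, so $\eta_2$ can be chosen with $\partial_{\varepsilon_2}\hat I(0,0) \neq 0$. The finite-dimensional Lagrange multiplier rule then furnishes a multiplier $\lambda$ for which $\nabla\hat J(0,0) = \lambda\,\nabla\hat I(0,0)$; equivalently, the first variation of the augmented functional $\int_{t_1}^{t_2} F[q,\lambda]_\tau(t)\,dt$ vanishes for every admissible variation $\eta$. Here one uses that $\lambda$ is pinned down by the chosen nondegenerate direction $\eta_2$ and is therefore independent of the remaining variation $\eta_1$, so that the same $\lambda$ serves all admissible $\eta$.

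It then remains to extract the pointwise equation from $\delta\!\int F\,dt = 0$. Computing the first variation produces the undelayed terms $\partial_2 F\,\eta + \partial_3 F\,\dot\eta$ together with the delayed terms $\partial_4 F\,\eta(t-\tau) + \partial_5 F\,\dot\eta(t-\tau)$. The decisive manipulation is the change of variable $s = t-\tau$ in the delayed contributions: using $\eta\equiv 0$ on $[t_1-\tau,t_1]$, this shifts $\partial_4 F$ and $\partial_5 F$ to arguments evaluated at $t+\tau$ and confines their support to $[t_1,t_2-\tau]$. Consequently the integrand splits: on $[t_1,t_2-\tau]$ the coefficient of $\eta$ is $\partial_2 F[q,\lambda]_\tau(t)+\partial_4 F[q,\lambda]_\tau(t+\tau)$ and that of $\dot\eta$ is $\partial_3 F[q,\lambda]_\tau(t)+\partial_5 F[q,\lambda]_\tau(t+\tau)$, whereas on $[t_2-\tau,t_2]$ only the undelayed coefficients $\partial_2 F$ and $\partial_3 F$ survive, since $t+\tau>t_2$ falls outside the domain of integration.

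Finally, integrating by parts to transfer the derivative off $\dot\eta$, and noting that the boundary terms vanish by the choice of $\eta$, I would apply the fundamental lemma of the calculus of variations on each subinterval separately, $\eta$ being otherwise arbitrary, to obtain the two cases of \eqref{eq:eldf11}. The step I expect to be the main obstacle is precisely this delayed bookkeeping: keeping track of the shifted arguments under $s=t-\tau$ and handling the junction at $t=t_2-\tau$ correctly. Choosing variations supported in the interior of each subinterval isolates the two equations cleanly and sidesteps any continuity issue at the breakpoint.
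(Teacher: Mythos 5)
Your proposal is correct and follows essentially the same route the paper takes: the paper does not prove Theorem~\ref{Thm:FractELeq1} separately (it is imported from \cite{GFML2016}), but its proof of the higher-order version, Theorem~\ref{Thm:ELdeordm}, uses exactly your two-parameter family of variations, the normality hypothesis to secure a nondegenerate direction for the implicit-function/Lagrange-multiplier step, the change of variables $t=\sigma+\tau$ to shift the delayed terms, and integration by parts followed by the fundamental (DuBois--Reymond) lemma on the two subintervals. Your explicit remark that $\lambda$ is determined by the fixed direction $\eta_2$ and hence independent of $\eta_1$ is a point the paper leaves implicit, and is a welcome clarification.
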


\begin{remark}
\label{re:EL} If one extends the set of admissible functions in
problem \eqref{Pe}--\eqref{Pe2} to the class of Lipschitz continuous
functions, then the isoperimetric Euler--Lagrange equations
\eqref{eq:eldf11} remain valid. This result is obtained from our
Corollary~\ref{cor:16} by choosing $m = 1$.
\end{remark}

\begin{definition}[Isoperimetrc extremals with time delay]
\label{def:scale:ext}
The solutions $q(\cdot)\in
C^2\left([t_{1}-\tau,t_{2}]\right)$ of the
Euler--Lagrange equations (\ref{eq:eldf11}) are called
\emph{isoperimetric extremals with time delay}.
\end{definition}

\begin{theorem}[Isoperimetric DuBois--Reymond necessary condition with time delay \cite{GFML2016}]
\label{theo:cdrnd} If $q(\cdot)$ is an isoperimetric extremals with
time delay such that
\begin{equation}\label{CDUR}
\partial_4F[q]_{\tau}(t+\tau)\cdot \dot{q}(t)+\partial_5F[q]_{\tau}(t+\tau)\cdot
\ddot{q}(t)=0
\end{equation}

for all $t\in[t_1-\tau,t_2-\tau]$, then it satisfies the following
conditions:
\begin{equation}
\label{eq:cdrnd}
\frac{d}{dt}\left\{F[q]_{\tau}(t)-\dot{q}(t)\cdot(\partial_{3}
F[q]_{\tau}(t) +\partial_{5} F[q]_{\tau}(t+\tau))\right\} =
\partial_{1} F[q]_{\tau}(t)
\end{equation}
for $t_1\leq t\leq t_{2}-\tau$, and
\begin{equation}
\label{eq:cdrnd1} \frac{d}{dt}\left\{F[q]_{\tau}(t)
-\dot{q}(t)\cdot\partial_{3} F[q]_{\tau}(t)\right\} =\partial_{1}
F[q]_{\tau}(t)
\end{equation}
for $t_2-\tau\leq t\leq t_{2}$, where $F$ is defined in
\eqref{eq:aug:Lag}.
\end{theorem}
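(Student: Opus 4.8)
The plan is to prove each of the two cases \eqref{eq:cdrnd}--\eqref{eq:cdrnd1} by directly differentiating the bracketed quantity and reducing it to $\partial_1 F[q]_\tau(t)$ using the Euler--Lagrange equations \eqref{eq:eldf11} together with the hypothesis \eqref{CDUR}. First I would record the total time derivative of the Lagrangian along the extremal, obtained by the chain rule,
\[
\frac{d}{dt}F[q]_\tau(t) = \partial_1 F[q]_\tau(t) + \partial_2 F[q]_\tau(t)\dot q(t) + \partial_3 F[q]_\tau(t)\ddot q(t) + \partial_4 F[q]_\tau(t)\dot q(t-\tau) + \partial_5 F[q]_\tau(t)\ddot q(t-\tau).
\]
This is the only place where the delayed arguments $q(t-\tau),\dot q(t-\tau)$ enter, and the two rightmost terms are precisely the ones that must eventually be annihilated by \eqref{CDUR}.

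For the first interval $t_1 \le t \le t_2-\tau$, I would expand $\frac{d}{dt}\bigl\{F[q]_\tau(t) - \dot q(t)\cdot(\partial_3 F[q]_\tau(t) + \partial_5 F[q]_\tau(t+\tau))\bigr\}$ by the product rule, producing a term $-\ddot q(t)\cdot(\partial_3 F[q]_\tau(t)+\partial_5 F[q]_\tau(t+\tau))$ and a term $-\dot q(t)\cdot\frac{d}{dt}(\partial_3 F[q]_\tau(t)+\partial_5 F[q]_\tau(t+\tau))$. The latter derivative is exactly the left-hand side of the first Euler--Lagrange equation in \eqref{eq:eldf11}, so it may be replaced by $\partial_2 F[q]_\tau(t)+\partial_4 F[q]_\tau(t+\tau)$. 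Substituting the chain-rule formula above and cancelling the matching pairs $\partial_2 F[q]_\tau(t)\dot q(t)$ and $\partial_3 F[q]_\tau(t)\ddot q(t)$, the whole expression collapses to
\[
\partial_1 F[q]_\tau(t) + \bigl(\partial_4 F[q]_\tau(t)\dot q(t-\tau)+\partial_5 F[q]_\tau(t)\ddot q(t-\tau)\bigr) - \bigl(\partial_4 F[q]_\tau(t+\tau)\dot q(t)+\partial_5 F[q]_\tau(t+\tau)\ddot q(t)\bigr).
\]

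The two remaining parenthesised groups are precisely two instances of the hypothesis \eqref{CDUR}: the second group is \eqref{CDUR} evaluated at $t$ itself, which vanishes for $t\in[t_1,t_2-\tau]\subset[t_1-\tau,t_2-\tau]$; the first group is \eqref{CDUR} under the substitution $t\mapsto t-\tau$, which vanishes for every $t\in[t_1,t_2]$. This yields \eqref{eq:cdrnd}. The second interval $t_2-\tau\le t\le t_2$ is handled in the same way but more simply: the bracket now contains only $\dot q(t)\cdot\partial_3 F[q]_\tau(t)$, the second Euler--Lagrange equation $\frac{d}{dt}\partial_3 F[q]_\tau(t)=\partial_2 F[q]_\tau(t)$ is used in place of the first, no $t+\tau$ terms arise, and only the shifted instance of \eqref{CDUR} is needed to remove the surviving delayed terms, giving \eqref{eq:cdrnd1}.

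The computation is otherwise routine; the one point requiring care --- and the step I expect to be the main obstacle --- is the bookkeeping of the time shifts, specifically recognising that \eqref{CDUR} must be invoked in two distinct forms, at $t$ and at $t-\tau$, and verifying that each form is valid on the relevant subinterval. Keeping the advanced and delayed arguments properly aligned, so that exactly the right terms cancel and no spurious $\partial_4$ or $\partial_5$ contribution is left over, is where an error would most easily creep in.
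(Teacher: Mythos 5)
Your computation is correct and follows essentially the same route the paper takes for its higher-order generalization (Theorem~\ref{theo:cDRifm}, specialized to $m=1$ in Corollary~\ref{cor:DR:m1}): expand the derivative of the bracketed quantity by the chain and product rules, substitute the Euler--Lagrange equations \eqref{eq:eldf11}, and annihilate the leftover delayed terms by invoking \eqref{CDUR} both at $t$ and, after the shift $t\mapsto t-\tau$, on the appropriate subintervals --- exactly the two instances the paper's proof also uses. The only cosmetic difference is that the paper phrases the argument in integral form (integrating from $t_1$ to an arbitrary $x$) so that it survives in the Lipschitz setting, whereas your pointwise differentiation is perfectly adequate for the $\mathcal{C}^2$ extremals of this statement.
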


\begin{remark}
If we assume that admissible functions in problem
\eqref{Pe}--\eqref{Pe2} are Lipschitz continuous, then one can show
that the DuBois--Reymond necessary conditions with time delay
\eqref{eq:cdrnd} are still valid (cf. Corollary~\ref{cor:DR:m1} by
choosing $m = 1)$.
\end{remark}

\begin{definition}[Invariance up to a gauge-term]
\label{def:invndLIP} Consider the following $s$-parameter group of
infinitesimal transformations:
\begin{equation}
\label{eq:tinf}
\begin{cases}
\bar{t} = t + s\eta(t,q) + o(s) \, ,\\
\bar{q}(t) = q(t) + s\xi(t,q) + o(s),
\end{cases}
\end{equation}
where $\eta\in \mathcal{C}^1(\mathbb{R})$ and
$\xi\in \mathcal{C}^1(\mathbb{R}^{2})$. We say that
functional \eqref{agp} is invariant under the $s$-parameter group of
infinitesimal transformations \eqref{eq:tinf} up to the gauge-term
$\Phi$ if
\begin{multline}
\label{eq:invndLIP} \int_{I} \dot{\Phi}[q]_{\tau}(t)dt =
\frac{d}{ds} \int_{\bar{t}(I)}
F\left(t+s\eta(t,q(t))+o(s),q(t)+s\xi(t,q(t))+o(s),
\frac{\dot{q}(t)+s\dot{\xi}(t,q(t))}{1+s\dot{\eta}(t,q(t))},\right.\\
\left. q(t-\tau)+s\xi(t-\tau,q(t-\tau))+o(s),\frac{\dot{q}(t-\tau)
+s\dot{\xi}(t-\tau,q(t-\tau))}{1+s\dot{\eta}(t-\tau,q(t-\tau))}\right)
(1+s\dot{\eta}(t,q(t))) dt\Biggr|_{s=0}
\end{multline}
for any  subinterval $I \subseteq [t_1,t_2]$ and for all
$q(\cdot)\in Lip\left([t_1-\tau,t_2]\right)\,.$
\end{definition}

\begin{definition}[Isoperimetric constant of motion/isoperimetric conservation law with time delay]
\label{def:leicond} We say that a quantity
$C(t,t+\tau,q(t),q(t-\tau),q(t+\tau),\dot{q}(t),\dot{q}(t-\tau),\dot{q}(t+\tau))$
is an \emph{isoperimetric constant of motion with time delay} $\tau$
if
\begin{equation}
\label{eq:conslaw:td1} \frac{d}{dt}
C(t,t+\tau,q(t),q(t-\tau),q(t+\tau),\dot{q}(t),\dot{q}(t-\tau),\dot{q}(t+\tau))=
0
\end{equation}
along all the extremals $q(\cdot)$ (\textrm{cf.}
Definition~\ref{def:scale:ext}). The equality \eqref{eq:conslaw:td1}
is then an  \emph{isoperimetric conservation law with time delay}.
\end{definition}

The next theorem establishes an extension of Noether's theorem to
isoperimetric problems of the calculus of variations with time
delay.

\begin{theorem}[Isoperimetric Noether's symmetry theorem with time delay for Lipschitz functions \cite{GFML2016}]
\label{theo:tnnd} If functional \eqref{Pe} is invariant up to $\Phi$
in the sense of Definition~\ref{def:invndLIP}, then the quantity
$C(t,t+\tau,q(t),q(t-\tau),q(t+\tau),\dot{q}(t),\dot{q}(t-\tau),\dot{q}(t+\tau))$
defined by
\begin{multline}
\label{eq:tnnd} -\Phi[q]_{\tau}(t)+\left(\partial_{3} F[q]_{\tau}(t)
+\partial_{5} F[q]_{\tau}(t+\tau)\right)\cdot\xi(t,q(t))\\
+\Bigl(F[q]_{\tau}-\dot{q}(t)\cdot(\partial_{3} F[q]_{\tau}(t)
+\partial_{5} F[q]_{\tau}(t+\tau))\Bigr)\eta(t,q(t))
\end{multline}
for $t_1\leq t\leq t_{2}-\tau$ and by
\begin{equation}
\label{eq:Noeth} -\Phi[q]_{\tau}(t)+\partial_{3}
F[q]_{\tau}(t)\cdot\xi(t,q(t))
+\Bigl(F[q]_{\tau}-\dot{q}(t)\cdot\partial_{3}
F[q]_{\tau}(t)\Bigr)\eta(t,q(t))
\end{equation}
for $t_2-\tau\leq t\leq t_{2}\,,$ is a constant of motion with time
delay along any $q(\cdot)\in
Lip\left([t_1-\tau,t_2]\right)$ satisfying both
\eqref{eq:eldf11} and \eqref{eq:cdrnd}-\eqref{eq:cdrnd1}, i.e.,
along any Lipschitz Euler--Lagrange extremal that is also a
Lipschitz DuBois--Reymond extremal that satisfy the condition
\eqref{CDUR}.
\end{theorem}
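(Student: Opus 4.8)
The plan is to run the classical Noether argument --- derive the \emph{necessary condition of invariance} from Definition~\ref{def:invndLIP}, then feed the Euler--Lagrange equation \eqref{eq:eldf11} and the DuBois--Reymond condition \eqref{eq:cdrnd}--\eqref{eq:cdrnd1} into the total derivative of $C$ --- but organised around the two regimes $t_1\le t\le t_2-\tau$ and $t_2-\tau\le t\le t_2$ forced by the delay. First I would differentiate the right-hand side of \eqref{eq:invndLIP} with respect to $s$ at $s=0$, using $\frac{d}{ds}\frac{\dot q+s\dot\xi}{1+s\dot\eta}\big|_{s=0}=\dot\xi-\dot q\,\dot\eta$ and the Jacobian contribution $F\,\dot\eta$. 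Since the identity holds for every subinterval $I\subseteq[t_1,t_2]$ and every $q\in Lip$, I would strip the integral sign to obtain, with $\eta=\eta(t,q(t))$ and $\xi=\xi(t,q(t))$, the pointwise identity
\begin{multline*}
\dot{\Phi}[q]_{\tau}(t) = \partial_{1}F[q]_{\tau}(t)\,\eta + \partial_{2}F[q]_{\tau}(t)\,\xi + \partial_{3}F[q]_{\tau}(t)\bigl(\dot{\xi}-\dot{q}(t)\dot{\eta}\bigr) + F[q]_{\tau}(t)\,\dot{\eta}\\
+ \partial_{4}F[q]_{\tau}(t)\,\xi(t-\tau,q(t-\tau)) + \partial_{5}F[q]_{\tau}(t)\bigl(\dot{\xi}(t-\tau,q(t-\tau)) - \dot{q}(t-\tau)\dot{\eta}(t-\tau,q(t-\tau))\bigr).
\end{multline*}

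The second step is to reconcile the \emph{retarded} terms above --- those carrying $\partial_4F[q]_\tau(t)$, $\partial_5F[q]_\tau(t)$ evaluated against $\xi,\eta$ at time $t-\tau$ --- with the conserved quantity \eqref{eq:tnnd}, which only ever exhibits $\xi,\eta$ at time $t$ and the \emph{advanced} combination $\partial_3F[q]_\tau(t)+\partial_5F[q]_\tau(t+\tau)$. Exactly as in the derivation of the delayed Euler--Lagrange equation, I would integrate the necessary condition over $[t_1,t_2]$ and apply the linear change of variable $t\mapsto t+\tau$ to the retarded terms, which moves their domain to $[t_1-\tau,t_2-\tau]$ and converts $\partial_4F[q]_\tau(t)\,\xi(t-\tau)$, $\partial_5F[q]_\tau(t)(\cdots)$ into $\partial_4F[q]_\tau(t+\tau)\,\xi(t)$, $\partial_5F[q]_\tau(t+\tau)(\cdots)$. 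Because $q$ is prescribed on the initial segment $[t_1-\tau,t_1]$ by \eqref{Pe2} the contribution there is inert, and because the shifted argument $t+\tau$ leaves $[t_1,t_2]$ once $t>t_2-\tau$, the advanced terms survive only on $[t_1,t_2-\tau]$ and disappear on $[t_2-\tau,t_2]$. Re-localising on each regime then furnishes the version of the necessary condition adapted to that regime.

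Third, I would differentiate $C$ along $q(\cdot)$. On $t_1\le t\le t_2-\tau$, abbreviating $A:=\partial_3F[q]_\tau(t)+\partial_5F[q]_\tau(t+\tau)$, one has $\frac{d}{dt}C=-\dot\Phi+\dot A\,\xi+A\,\dot\xi+\frac{d}{dt}\bigl(F[q]_\tau-\dot qA\bigr)\eta+\bigl(F[q]_\tau-\dot qA\bigr)\dot\eta$. Substituting the first line of \eqref{eq:eldf11} for $\dot A=\partial_2F[q]_\tau(t)+\partial_4F[q]_\tau(t+\tau)$ and \eqref{eq:cdrnd} for $\frac{d}{dt}(F[q]_\tau-\dot qA)=\partial_1F[q]_\tau(t)$, and then eliminating $\dot\Phi$ by the regime-adapted necessary condition of the previous step, every term cancels and $\frac{d}{dt}C=0$. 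The computation on $t_2-\tau\le t\le t_2$ is the same with $A$ replaced by $\partial_3F[q]_\tau(t)$, using the second lines of \eqref{eq:eldf11} and \eqref{eq:cdrnd1}; this yields the second form \eqref{eq:Noeth} of $C$.

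The hard part will be the second step. The change of variable is what turns the retarded data into the advanced combination that the Euler--Lagrange and DuBois--Reymond equations are designed to absorb, so all of the delay-specific difficulty is concentrated there: one must track the boundary contributions created at $t_1$ and $t_2$, justify the splitting at $t_2-\tau$, and argue the passage from the integrated identity to an a.e.\ pointwise statement for Lipschitz $q$. The DuBois--Reymond hypothesis is indispensable precisely because of this low regularity: for $\mathcal{C}^2$ extremals \eqref{eq:cdrnd} follows from \eqref{eq:eldf11}, so the $\eta$-terms would close automatically, but for merely Lipschitz $q$ one cannot differentiate $\dot q$, and the identity $\frac{d}{dt}(F[q]_\tau-\dot qA)=\partial_1F[q]_\tau$ must be imposed --- it is Theorem~\ref{theo:cdrnd}, which holds under the extra condition \eqref{CDUR}. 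This is exactly why the statement restricts attention to Lipschitz extremals that are simultaneously Euler--Lagrange and DuBois--Reymond extremals.
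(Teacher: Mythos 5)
Your proposal is correct and follows essentially the same route the paper uses: the paper states Theorem~\ref{theo:tnnd} without proof (citing \cite{GFML2016}), but its proof of the higher-order analogue (Lemma~\ref{thm:cnsi} followed by Theorem~\ref{thm:Noether}) is exactly your argument --- differentiate the invariance identity at $s=0$, shift the retarded terms by $t\mapsto t+\tau$ using the vanishing of $\xi,\eta$ on $[t_1-\tau,t_1]$ to split into the two regimes, then differentiate $C$ and absorb the result with the Euler--Lagrange and DuBois--Reymond conditions. Your closing remark on why the DuBois--Reymond hypothesis must be imposed separately for merely Lipschitz extremals matches the paper's rationale.
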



\section{Nonsmooth higher-order Noether's theorem for isoperimetric
problems of the calculus of variations with time delay}
\label{sec:MRHO}

Let $\mathbb{W}^{k,p}$, $k\geq 1$, $1\leq p \leq \infty$, denote the
class of functions that are absolutely continuous with their
derivatives up to order $k-1$, the $k$th derivative belonging to
$L^p$. With this notation, the class $Lip$ of Lipschitz functions is
represented by $\mathbb{W}^{1,\infty}$. We now extend previous
results to isoperimetric problems with higher-order derivatives.


\subsection{Higher-order Euler--Lagrange and DuBois--Reymond optimality conditions with time delay}
\label{HOEL}
Let $m\in\mathbb{N}$ and $q^{(i)}(t)$ denote the $i$th
derivative of the vector $q(t)$ defined in $\mathbb{R}^n$ ($n\in \mathbb{N}^*$), $i=0,\dots,m$, with $q^{(0)}(t)=q(t)$. For
simplicity of notation, we introduce the operator $[\cdot]^m_{\tau}$
by
$$
[q]^m_{\tau}(t) := \Bigl(t,q(t),\dot{q}(t),
\ldots,q^{(m)}(t),\\q(t-\tau),\dot{q}(t-\tau),
\ldots,q^{(m)}(t-\tau)\Bigr).
$$
Consider the following higher-order isoperimetric variational
problem with time delay:
\begin{problem}\label{Pb1m}
To minimize
\begin{equation}
\label{Pm} J^{\tau}_{m}[q(\cdot)] =\int_{t_1}^{t_2} L[q]^m_{\tau}(t)
dt
\end{equation}
subject to  the isoperimetric equality constraints
\begin{equation}
\label{C1T} I^{\tau}_{m}[q(\cdot)]=\int_{t_1}^{t_2} g[q]^m_{\tau}(t)
dt=l,\,\,\,l\in\mathbb{R}^k\,,
\end{equation}
boundary conditions \eqref{Pe2} and
$q^{(i)}(t_2)=q_{t_2}^i,~i=1,\dots,m-1$. The functions $L,g
:[t_1,t_2] \times \mathbb{R}^{2 n (m+1)} \rightarrow \mathbb{R}$ are
assumed to be a $\mathcal{C}^{m+1}$-function with respect to all
their arguments, admissible functions $q(\cdot)$ are assumed to be
$\mathbb{W}^{m,\infty}$, $t_{1}< t_{2}$ are fixed in $\mathbb{R}$,
$\tau$ is a given positive real number such that $\tau<t_{2}-t_{1}$,
$q_{t_2}^i$ are given vectors in $\mathbb{R}^n$, $i=1,\dots,m-1\,,$
 $l$ is a specified real constant and $\delta$ is a given
piecewise smooth function on $[t_1-\tau,t_1]$.
\end{problem}

\begin{remark}
When $m=1$ and $n=1$ the Problem~\ref{Pb1m} reduces to Problem~\ref{Pb1}.
\end{remark}

In \cite{GF2013} the authors proved the following corollary:
\begin{corollary}[Higher-order Euler--Lagrange equations with time delay in differential form] If
$q(\cdot)\in\mathbb{W}^{m,\infty}\left([t_1-\tau,t_2],
\mathbb{R}^{n}\right)$ is an extremal of functional \eqref{Pm},
 then
\begin{equation*}
\sum_{i=0}^{m}(-1)^{i}\frac{d^{i}}{dt^{i}}\Bigl(\partial_{i+2}
L[q]^m_{\tau}(t)+\partial_{i+m+3} L[q]^m_{\tau}(t+\tau)\Bigr)=0
\end{equation*}
for $t_1\leq t\leq t_{2}-\tau$ and
\begin{equation*}
\sum_{i=0}^{m}(-1)^{i}\frac{d^{i}}{dt^{i}}\partial_{i+2}
L[q]^m_{\tau}(t)=0
\end{equation*}
for $t_{2}-\tau\leq t \leq t_{2}$.
\end{corollary}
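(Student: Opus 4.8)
The plan is to derive the two equations from the vanishing of the first variation, exploiting the standard device for delayed problems (a change of variable that advances the retarded terms by $\tau$) together with iterated integration by parts and a higher-order fundamental lemma adapted to the nonsmooth class $\mathbb{W}^{m,\infty}$. First I would fix an admissible variation $q^{\epsilon}=q+\epsilon h$, where $h\in\mathbb{W}^{m,\infty}([t_1-\tau,t_2],\mathbb{R}^{n})$ respects all the constraints of Problem~\ref{Pb1m}: it vanishes on the initial interval, $h(t)=0$ for $t\in[t_1-\tau,t_1]$, and $h^{(i)}(t_2)=0$ for $i=0,\ldots,m-1$. Since $q$ is an extremal, $\left.\frac{d}{d\epsilon}J^{\tau}_{m}[q^{\epsilon}]\right|_{\epsilon=0}=0$, and differentiating under the integral sign gives the first variation
$$\int_{t_1}^{t_2}\sum_{i=0}^{m}\Bigl(\partial_{i+2}L[q]^m_{\tau}(t)\cdot h^{(i)}(t)+\partial_{i+m+3}L[q]^m_{\tau}(t)\cdot h^{(i)}(t-\tau)\Bigr)\,dt=0.$$

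Next I would treat the retarded terms $h^{(i)}(t-\tau)$. Performing the substitution $t\mapsto t+\tau$ in those terms, and using that $h$ together with its relevant derivatives vanishes on $[t_1-\tau,t_1]$, each such integral transforms into $\int_{t_1}^{t_2-\tau}\partial_{i+m+3}L[q]^m_{\tau}(t+\tau)\cdot h^{(i)}(t)\,dt$; the contribution over $[t_2-\tau,t_2]$ drops out because $t+\tau$ then leaves $[t_1,t_2]$. Regrouping, the identity becomes an integral over $[t_1,t_2-\tau]$ with coefficient $\partial_{i+2}L[q]^m_{\tau}(t)+\partial_{i+m+3}L[q]^m_{\tau}(t+\tau)$ plus an integral over $[t_2-\tau,t_2]$ with coefficient $\partial_{i+2}L[q]^m_{\tau}(t)$, both contracted against $h^{(i)}(t)$.

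To separate the two regimes I would choose $h$ supported in the interior of $(t_1,t_2-\tau)$ to isolate the first equation, and then supported in $(t_2-\tau,t_2)$ to isolate the second, with $h^{(i)}$ vanishing at the endpoints of the chosen subinterval for $i\leq m-1$. On each subinterval I would integrate by parts $i$ times in the term carrying $h^{(i)}$, transferring the derivatives onto the coefficient as $(-1)^{i}\frac{d^{i}}{dt^{i}}$; the boundary terms vanish by the support conditions. A higher-order fundamental lemma of the calculus of variations then forces the vanishing of $\sum_{i=0}^{m}(-1)^{i}\frac{d^{i}}{dt^{i}}(\cdots)$ on each subinterval, which are precisely the two asserted equations.

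The main obstacle is the nonsmooth setting: for $q\in\mathbb{W}^{m,\infty}$ the quantities $\partial_{i+2}L[q]^m_{\tau}$ need not be $i$ times classically differentiable, so the iterated integration by parts and the symbol $\frac{d^{i}}{dt^{i}}$ cannot be read literally. I would therefore first establish the optimality condition in an integrated (\emph{DuBois--Reymond}) form, in which the derivatives are shifted exclusively onto the smooth test functions, and only afterwards recover the differential form of the corollary, interpreting the high-order derivatives in the generalized sense guaranteed by the $\mathbb{W}^{m,\infty}$ regularity. Care is likewise needed at the junction $t=t_2-\tau$, where matching the two expressions relies on the independence of the admissible variations over the two subintervals.
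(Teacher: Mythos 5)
Your proposal is correct and follows essentially the route the paper itself relies on: the paper does not prove this corollary directly (it is imported from \cite{GF2013}), but its proof of the isoperimetric analogue (Theorem~\ref{Thm:ELdeordm} together with Corollary~\ref{cor:16}) proceeds exactly as in your final paragraph --- first an integral form in which all derivatives rest on the test function $h^{(m)}$ while the coefficients are only antidifferentiated, then the higher-order DuBois--Reymond lemma producing a polynomial of degree $m-1$, and finally $m$ differentiations to recover the differential form. Your diagnosis that the naive route of your second and third paragraphs (transferring $i$ derivatives onto the merely $L^\infty$ coefficients before invoking a fundamental lemma) breaks down in $\mathbb{W}^{m,\infty}$, and the repair you propose, are precisely the paper's argument.
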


The previous corollary motivates the following definition.

\begin{definition} An admissible function
$q(\cdot)\in\mathbb{W}^{m,\infty}\left([t_1-\tau,t_2],
\mathbb{R}^{n}\right)$ is an extremal for problem
\eqref{C1T}--\eqref{Pe2} if it satisfies the following
Euler--Lagrange equations with time delay:
\begin{equation}\label{elg}
\sum_{i=0}^{m}(-1)^{i}\frac{d^{i}}{dt^{i}}\Bigl(\partial_{i+2}
g[q]^m_{\tau}(t)+\partial_{i+m+3} g[q]^m_{\tau}(t+\tau)\Bigr)=0
\end{equation}
for $t_1\leq t\leq t_{2}-\tau$ and
\begin{equation}\label{elg1}
\sum_{i=0}^{m}(-1)^{i}\frac{d^{i}}{dt^{i}}\partial_{i+2}
g[q]^m_{\tau}(t)=0
\end{equation}
for $t_{2}-\tau\leq t \leq t_{2}$.
\end{definition}

Now we extend notion of normal extremizer
(Definition~\ref{def:extr}) to higher-order normal extremizer for
isoperimetric problems of the calculus of variations with time
delay.

\begin{definition}\label{def:extr1}
An extremizer of Problem~\ref{Pb1m} that does not satisfy
\eqref{elg}--\eqref{elg1} is said to be a \emph{higher-order normal
extremizer}; otherwise (i.e., if it satisfies
\eqref{elg}--\eqref{elg1} for all $t\in[t_1,t_2]$), is said to be
\emph{higher-order abnormal extremizer}.
\end{definition}

The next theorem is crucial for our purposes.

\begin{theorem}[Isoperimetric higher-order Euler--Lagrange equations with time delay in integral form]
\label{Thm:ELdeordm} If
$q(\cdot)\in\mathbb{W}^{m,\infty}\left([t_1-\tau,t_2],
\mathbb{R}^{n}\right)$ is a higher-order normal extremizer of
Problem~\ref{Pb1m}, then $q(\cdot)$ satisfies the following
isoperimetric higher-order Euler--Lagrange integral equations  with
time delay:
\begin{multline}
\label{eq:ELdeordmInt}
\sum_{i=0}^{m}(-1)^{m-i-1}\Biggl(\underbrace{\int_{t_2-\tau}^{t}
\int_{t_2-\tau}^{s_1}\dots
\int_{t_2-\tau}^{s_{m-i-1}}}_{m-i~\textnormal{times}}
\Bigl(\partial_{i+2}
F[q]^m_{\tau}(s_{m-i})\\
+\partial_{i+m+3} F[q]^m_{\tau}(s_{m-i}+\tau)\Bigr)ds_{m-i}\dots
ds_2 ds_1\Biggr)=p(t)
\end{multline}
for $t_1\leq t\leq t_{2}-\tau$ and
\begin{equation}
\label{eq:ELdeordmInt1}
\sum_{i=0}^{m}(-1)^{m-i-1}\Biggl(\underbrace{\int_{t_2-\tau}^{t}
\int_{t_2-\tau}^{s_1}\dots
\int_{t_2-\tau}^{s_{m-i-1}}}_{m-i~\textnormal{times}}\Bigl(\partial_{i+2}
F[q]^m_{\tau}(t)\Bigr)ds_{m-i}\dots ds_2 ds_1\Biggr) =p(t)
\end{equation}
for $t_{2}-\tau\leq t \leq t_{2}$, where $F$ is the augmented
Lagrangian \eqref{eq:aug:Lag} associated with Problem~\ref{Pb1m},
$p(t)$ is a polynomial of order $m-1$, i.e., $p(t)=c_0+c_1t+\dots
+c_{m-1}t^{m-1}$ for some constants $c_i\in\mathbb{R}$,
$i=0,\dots,m-1$.
\end{theorem}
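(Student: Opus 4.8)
The plan is to establish the isoperimetric higher-order Euler--Lagrange equations in integral form by combining the Lagrange multiplier rule with a careful application of the fundamental lemma of the calculus of variations adapted to the delayed setting. Since $q(\cdot)$ is a higher-order \emph{normal} extremizer, the multiplier rule applied to Problem~\ref{Pb1m} reduces the constrained problem to finding stationary points of the augmented functional $\int_{t_1}^{t_2} F[q]^m_{\tau}(t)\,dt$ with $F = L - \lambda\cdot g$. Thus it suffices to derive the stated integral equations for this augmented Lagrangian $F$.

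First I would compute the first variation of $\int_{t_1}^{t_2} F[q]^m_{\tau}(t)\,dt$ along an admissible variation $q + \varepsilon h$, where $h\in\mathbb{W}^{m,\infty}$ vanishes (together with enough of its derivatives) at the boundary and on $[t_1-\tau,t_1]$. The delayed terms in $F$ produce, after the standard change of variable $t\mapsto t+\tau$ in the integrals involving $q(t-\tau)$, the combined coefficients $\partial_{i+2}F[q]^m_{\tau}(t)+\partial_{i+m+3}F[q]^m_{\tau}(t+\tau)$ on the subinterval $[t_1,t_2-\tau]$ and the bare coefficients $\partial_{i+2}F[q]^m_{\tau}(t)$ on $[t_2-\tau,t_2]$; this is exactly the split that appears in the two cases of the statement. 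Setting the first variation to zero gives $\sum_{i=0}^m \int \bigl(\text{coefficient}\bigr)\cdot h^{(i)}(t)\,dt = 0$ for all such $h$.

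The key step is then to integrate by parts repeatedly to transfer all derivatives off the test function $h$, but \emph{without} assuming that the coefficients $\partial_{i+2}F$ are differentiable --- this is precisely the point of working in $\mathbb{W}^{m,\infty}$ rather than $\mathcal{C}^{m+1}$. Instead of differentiating the coefficients, I would integrate $h^{(i)}$ up: each integration by parts trades one derivative of $h$ for one iterated integral of the coefficient, and the boundary terms vanish by the conditions on $h$. Carrying this out for each $i$ produces the $(m-i)$-fold iterated integrals $\int_{t_2-\tau}^{t}\int_{t_2-\tau}^{s_1}\cdots$ appearing in \eqref{eq:ELdeordmInt}, with the alternating sign $(-1)^{m-i-1}$ accounting for the parity of the number of integrations by parts. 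After this reduction, the variational identity reads $\int_{t_1}^{t_2}\Psi(t)\cdot h^{(m)}(t)\,dt = 0$ for all admissible $h$, where $\Psi(t)$ collects the iterated-integral expression on the left-hand side of the claimed equations.

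The main obstacle --- and the step I expect to require the most care --- is the concluding application of the appropriate fundamental lemma. From $\int_{t_1}^{t_2}\Psi(t)\cdot h^{(m)}(t)\,dt=0$ for every $h$ whose first $m-1$ derivatives vanish at both endpoints, the generalized DuBois--Reymond lemma forces $\Psi(t)$ to be a polynomial of degree at most $m-1$, which is exactly the $p(t)=c_0+c_1 t+\cdots+c_{m-1}t^{m-1}$ on the right-hand side; the $m-1$ free constants $c_i$ are the dual objects to the $m-1$ boundary constraints at $t_2$. I would state and use this lemma explicitly, taking care that it applies under the Lipschitz/$\mathbb{W}^{m,\infty}$ regularity and that the delayed change of variables does not disturb the endpoint bookkeeping at the junction $t_2-\tau$, where the two cases \eqref{eq:ELdeordmInt} and \eqref{eq:ELdeordmInt1} must match. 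Equating $\Psi(t)=p(t)$ on each of the two subintervals then yields precisely the two asserted integral equations, completing the proof.
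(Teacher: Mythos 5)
Your analytic core --- computing the first variation of the augmented functional, shifting the delayed terms by $t\mapsto t+\tau$, integrating by parts so that derivatives of $h$ are traded for iterated integrals of the (possibly nondifferentiable) coefficients, and concluding with the higher-order DuBois--Reymond lemma that the resulting expression is a polynomial of degree $m-1$ --- is exactly the paper's route, and that part of your outline is sound.

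The gap is at the very first step: you invoke ``the multiplier rule applied to Problem~\ref{Pb1m}'' as if it were an off-the-shelf fact that reduces the constrained problem to the free problem for $F=L-\lambda\cdot g$. For this class of problems (isoperimetric, delayed, $\mathbb{W}^{m,\infty}$ admissible functions) that reduction is not a citation; it is the bulk of the proof. The paper establishes it by taking a two-parameter family of variations $\hat q=q+\epsilon_1h_1+\epsilon_2h_2$, setting $\hat I(\epsilon_1,\epsilon_2)=I^{\tau}_m[\hat q]-l$ and $\hat J(\epsilon_1,\epsilon_2)=J^{\tau}_m[\hat q]$, and showing --- via the same integration-by-parts computation you describe, but applied to $g$ rather than to $F$ --- that normality (i.e., failure of \eqref{elg}--\eqref{elg1}) together with the fundamental lemma yields some $h_2$ with $\partial\hat I/\partial\epsilon_2|_{(0,0)}\neq 0$. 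The implicit function theorem then produces admissible variations inside the constraint set, and only then does the finite-dimensional Lagrange multiplier rule give $\nabla(\hat J-\lambda\hat I)(0,0)=\mathbf{0}$, which is the identity you feed into the DuBois--Reymond lemma. You correctly flag normality as the relevant hypothesis, but you never connect it to the mechanism that makes the multiplier $\lambda$ exist; without that two-parameter argument (or an explicit reference to a multiplier theorem valid in this delayed, nonsmooth setting) the proof is incomplete. A small additional point: a polynomial of degree $m-1$ has $m$ free coefficients $c_0,\dots,c_{m-1}$, matching the $m$ endpoint conditions $q(t_2)=q_{t_2}$ and $q^{(i)}(t_2)=q^i_{t_2}$, $i=1,\dots,m-1$, not $m-1$ of each.
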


\begin{proof}Consider neighboring functions of the form
\begin{equation}
\label{admfunct} \hat{q}(t)=q(t)+\epsilon_1h_1(t)+\epsilon_2h_2(t),
\end{equation}
where for each $\kappa\in\{1,2\}$ $\epsilon_\kappa$ is a
sufficiently small parameter, $h_\kappa\in
\mathbb{W}^{m,\infty}\left([t_1-\tau,t_2], \mathbb{R}^{n}\right)$,
$h^{(i)}_{\kappa}(t_2)=h^{(i)}_{\kappa}(t_2-\tau)=0,\,i=0,\ldots,m-1$,
and $h_\kappa(t)=0$, $t \in[t_{1}-\tau,t_{1}]\,.$

First we will show that (\ref{admfunct}) has a subset of admissible
functions for the variational isoperimetric problem with time delay.
Consider the quantity
$$
I^{\tau}_m[\hat{q}(\cdot)]\\=\int_{t_1}^{t_2}
g[\hat{q}]^m_{\tau}(t)dt.
$$
Then we can regard $I^{\tau}_{m}[\hat{q}(\cdot)]$ as a function of
$\epsilon_1$ and $\epsilon_2$. Define
$\hat{I}(\epsilon_1,\epsilon_2)=I^{\tau}_{m}[\hat{q}(\cdot)]-l$.
Thus,
\begin{equation}
\label{implicit1} \hat I(0,0)=0\,.
\end{equation}

On the other hand, we have
\begin{equation}
\label{pel} \left.\frac{\partial \hat I}{\partial \epsilon_2}
\right|_{(0,0)}=\int_{t_1}^{t_2}\left(\sum_{i=0}^{m}\partial_{i+2}
g[q]^m_{\tau}(t)\cdot h^{(i)}_{2}(t)+\sum_{i=0}^{m}\partial_{i+m+3}
g[q]^m_{\tau}(t)\cdot h^{(i)}_{2}(t-\tau)\right)dt\,.
\end{equation}
Performing the linear change of variables $t=\sigma+\tau$ in the
last term of integral \eqref{pel}, and using the fact that
$h_{2}(t)=0$ if $t \in[t_{1}-\tau,t_{1}]$, \eqref{pel} becomes
\begin{multline}
\label{pel1} \left.\frac{\partial \hat I}{\partial \epsilon_2}
\right|_{(0,0)}=\int_{t_1}^{t_2}\left(\sum_{i=0}^{m}\partial_{i+2}
g[q]^m_{\tau}(t)\cdot h^{(i)}_{2}(t)\right)dt\\
+\int_{t_1}^{t_2-\tau}\left(\sum_{i=0}^{m}\partial_{i+m+3}
g[q]^m_{\tau}(t+\tau)\cdot h^{(i)}_{2}(t)\right)dt\,.
\end{multline}
By repeated integration by parts one has
\begin{multline}
\label{eq:identity1}
\sum\limits_{i=0}^{m} \int_{t_1}^{t_2} \partial_{i+2}g[q]^m_{\tau}(t)\cdot h^{(i)}_{2}(t) dt\\
=\sum\limits_{i=0}^{m}\Biggl\{\Biggl[\sum\limits_{j=1}^{m-i}(-1)^{j+1}h^{(i+j-1)}_{2}(t)
\cdot\Biggl(\underbrace{\int_{t_2-\tau}^{t}\int_{t_2-\tau}^{s_1}
\dots
\int_{t_2-\tau}^{s_{j-1}}}_{j~\textnormal{times}}\Bigl(\partial_{i+2}
g[q]^m_{\tau}(s_j)\Bigr)ds_{j}\dots ds_2 ds_1\Biggr)\Biggr]_{t_1}^{t_2}\\
+(-1)^{i}\int_{t_1}^{t_2}h^{(m)}_{2}(t) \cdot\Biggl(
\underbrace{\int_{t_2-\tau}^{t}\int_{t_2-\tau}^{s_1}\dots
\int_{t_2-\tau}^{s_{m-i-1}}}_{m-i~\textnormal{times}}\Bigl(\partial_{i+2}
g[q]^m_{\tau}(s_{m-i})\Bigr)ds_{m-i} \dots
ds_{2}ds_1\Biggr)dt\Biggr\}
\end{multline}
and
\begin{multline}
\label{eq:identity2} \sum\limits_{i=0}^{m}\int_{t_1}^{t_2-\tau}
\partial_{i+m+3}g[q]^m_{\tau}(t+\tau)\cdot h^{(i)}_{2}(t) dt\\
=\sum\limits_{i=0}^{m}\Biggl\{\Biggl[\sum\limits_{j=1}^{m-i}(-1)^{j+1}h^{(i+j-1)}_{2}(t)
\cdot\Biggl(\underbrace{\int_{t_2-\tau}^{t}\int_{t_2-\tau}^{s_1}
\dots
\int_{t_2-\tau}^{s_{j-1}}}_{j~\textnormal{times}}\Bigl(\partial_{i+m+3}
g[q]^m_{\tau}(s_j+\tau)\Bigr)ds_{j}\dots ds_2 ds_1\Biggr)\Biggr]_{t_1}^{t_2-\tau}\\
+(-1)^{i}\int_{t_1}^{t_2-\tau}h^{(m)}_{2}(t) \cdot\Biggl(
\underbrace{\int_{t_2-\tau}^{t}\int_{t_2-\tau}^{s_1} \dots
\int_{t_2-\tau}^{s_{m-i-1}}}_{m-i~\textnormal{times}}\Bigl(\partial_{i+m+3}
g[q]^m_{\tau}(s_{m-i}+\tau)\Bigr)ds_{m-i} \dots
ds_{2}ds_1\Biggr)dt\Biggr\}.
\end{multline}
Because $h^{(i)}_{2}(t_2)=0,\,i=0,\ldots,m-1$, and $h_{2}(t)=0$, $t
\in[t_{1}-\tau,t_{1}]$, the terms without integral sign in the
right-hand sides of identities \eqref{eq:identity1} and
\eqref{eq:identity2} vanish. Therefore, equation \eqref{pel1}
becomes
\begin{multline}
\label{eq:identity3}  \left.\frac{\partial \hat I}{\partial
\epsilon_2} \right|_{(0,0)}=\int_{t_1}^{t_2-\tau}h^{(m)}_{2}(t)
\cdot\Biggl[\sum\limits_{i=0}^{m}(-1)^{i} \Biggl(
\underbrace{\int_{t_2-\tau}^{t}\int_{t_2-\tau}^{s_1} \dots
\int_{t_2-\tau}^{s_{m-i-1}}}_{m-i~\textnormal{times}}\Bigl(\partial_{i+2}
g[q]^m_{\tau}(s_{m-i})\\
+\partial_{i+m+3}
g[q]^m_{\tau}(s_{m-i}+\tau)\Bigr)ds_{m-i} \dots ds_{2}ds_1\Biggr)\Biggr]dt\\
+\int_{t_2-\tau}^{t_2}h^{(m)}_{2}(t)
\cdot\Biggl[\sum\limits_{i=0}^{m}(-1)^{i} \Biggl(
\underbrace{\int_{t_2-\tau}^{t}\int_{t_2-\tau}^{s_1} \dots
\int_{t_2-\tau}^{s_{m-i-1}}}_{m-i~\textnormal{times}}\Bigl(\partial_{i+2}
g[q]^m_{\tau}(s_{m-i})\Bigr)ds_{m-i} \dots
ds_{2}ds_1\Biggr)\Biggr]dt\,.
\end{multline}
For $i=0,\dots,m$ we define functions
\begin{equation*}
\varphi_i (t)=
\begin{cases}
\partial_{i+2}g[q]^m_{\tau}(t)+\partial_{i+m+3}g[q]^m_{\tau}(t+\tau)
& ~\textnormal{for}~ t_1\leq t\leq t_2-\tau\\
\partial_{i+2}g[q]^m_{\tau}(t) & ~\textnormal{for}~ t_2-\tau\leq t\leq t_2.
\end{cases}
\end{equation*}
Then one can write equation \eqref{eq:identity3} as follows:
\begin{equation*}
\left.\frac{\partial \hat I}{\partial \epsilon_2}
\right|_{(0,0)}=\int_{t_1}^{t_2}h^{(m)}_{2}(t)
\cdot\Biggl[\sum\limits_{i=0}^{m}(-1)^{i} \Biggl(
\underbrace{\int_{t_2-\tau}^{t}\int_{t_2-\tau}^{s_1} \dots
\int_{t_2-\tau}^{s_{m-i-1}}}_{m-i~\textnormal{times}}
\Bigl(\varphi_i (s_{m-i})\Bigr)ds_{m-i} \dots
ds_{2}ds_1\Biggr)\Biggr]dt\,.
\end{equation*}

 Since $q(\cdot)\in\mathbb{W}^{m,\infty}\left([t_1-\tau,t_2],
\mathbb{R}^{n}\right)$ is a higher-order normal extremizer of
Problem~\ref{Pb1m}, by the fundamental lemma of the calculus of
variations (see, \textrm{e.g.}, \cite{Bruce:book}), there exists a
function $h_2$ such that
\begin{equation}
\label{implicit2} \left.\frac{\partial \hat I}{\partial \epsilon_2}
\right|_{(0,0)}\neq 0\,.
\end{equation}
Using (\ref{implicit1}) and (\ref{implicit2}), the implicit function
theorem asserts that there exists a function $\epsilon_2(\cdot)$,
defined in a neighborhood of zero, such that $\hat
I(\epsilon_1,\epsilon_2(\epsilon_1))=0$. Consider the real function
$\hat J(\epsilon_1,\epsilon_2)=J^{\tau}_{m}[\hat{q}(\cdot)]$. By
hypothesis, $\hat J$ has minimum (or maximum) at $(0,0)$ subject to
the constraint $\hat I(0,0)=0$, and we have proved that $\nabla \hat
I(0,0)\neq \textbf{0}$. We can appeal to the Lagrange multiplier
rule (see, \textrm{e.g.}, \cite[p.~77]{Bruce:book}) to assert the
existence of a number $\lambda$ such that $\nabla(\hat
J(0,0)-\lambda \cdot\hat I(0,0))=\textbf{0}$.

Repeating the calculations as before,

\begin{equation*}
\left.\frac{\partial \hat J}{\partial \epsilon_1}
\right|_{(0,0)}=\int_{t_1}^{t_2}h^{(m)}_{1}(t)
\cdot\Biggl[\sum\limits_{i=0}^{m}(-1)^{i} \Biggl(
\underbrace{\int_{t_2-\tau}^{t}\int_{t_2-\tau}^{s_1} \dots
\int_{t_2-\tau}^{s_{m-i-1}}}_{m-i~\textnormal{times}} \Bigl(\phi_i
(s_{m-i})\Bigr)ds_{m-i} \dots ds_{2}ds_1\Biggr)\Biggr]dt\,.
\end{equation*}
and

\begin{equation*}
\left.\frac{\partial \hat I}{\partial \epsilon_1}
\right|_{(0,0)}=\int_{t_1}^{t_2}h^{(m)}_{1}(t)
\cdot\Biggl[\sum\limits_{i=0}^{m}(-1)^{i} \Biggl(
\underbrace{\int_{t_2-\tau}^{t}\int_{t_2-\tau}^{s_1} \dots
\int_{t_2-\tau}^{s_{m-i-1}}}_{m-i~\textnormal{times}}
\Bigl(\varphi_i (s_{m-i})\Bigr)ds_{m-i} \dots
ds_{2}ds_1\Biggr)\Biggr]dt
\end{equation*}
where for $i=0,\dots,m$
\begin{equation*}
\phi_i (t)=
\begin{cases}
\partial_{i+2}L[q]^m_{\tau}(t)+\partial_{i+m+3}L[q]^m_{\tau}(t+\tau)
& ~\textnormal{for}~ t_1\leq t\leq t_2-\tau\\
\partial_{i+2}L[q]^m_{\tau}(t) & ~\textnormal{for}~ t_2-\tau\leq t\leq t_2.
\end{cases}
\end{equation*}

 Therefore, one has

 \begin{multline}\label{c123}
\int_{t_1}^{t_2}h^{(m)}_{1}(t)
\cdot\Biggl[\sum\limits_{i=0}^{m}(-1)^{i} \Biggl(
\underbrace{\int_{t_2-\tau}^{t}\int_{t_2-\tau}^{s_1} \dots
\int_{t_2-\tau}^{s_{m-i-1}}}_{m-i~\textnormal{times}} \Bigl(\phi_i
(s_{m-i})\Bigr)ds_{m-i} \dots ds_{2}ds_1\Biggr)\\
-\lambda\cdot \sum\limits_{i=0}^{m}(-1)^{i} \Biggl(
\underbrace{\int_{t_2-\tau}^{t}\int_{t_2-\tau}^{s_1} \dots
\int_{t_2-\tau}^{s_{m-i-1}}}_{m-i~\textnormal{times}}
\Bigl(\varphi_i (s_{m-i})\Bigr)ds_{m-i} \dots
ds_{2}ds_1\Biggr)\Biggr]dt=0\,.
\end{multline}

Applying the higher-order DuBois--Reymond lemma in \eqref{c123}
(see, \textrm{e.g.}, \cite{Jost:book,Troutman:book}), one arrives to
\eqref{eq:ELdeordmInt} and \eqref{eq:ELdeordmInt1}.
\end{proof}

\begin{corollary}[Isoperimetric higher-order Euler--Lagrange equations with time delay in differential form]
\label{cor:16} If
$q(\cdot)\in\mathbb{W}^{m,\infty}\left([t_1-\tau,t_2],
\mathbb{R}^{n}\right)$ is a higher-order normal extremizer of
Problem~\ref{Pb1m}, then
\begin{equation}
\label{eq:ELdeordm}
\sum_{i=0}^{m}(-1)^{i}\frac{d^{i}}{dt^{i}}\Bigl(\partial_{i+2}
F[q]^m_{\tau}(t)+\partial_{i+m+3} F[q]^m_{\tau}(t+\tau)\Bigr)=0
\end{equation}
for $t_1\leq t\leq t_{2}-\tau$ and
\begin{equation}
\label{eq:ELdeordm1}
\sum_{i=0}^{m}(-1)^{i}\frac{d^{i}}{dt^{i}}\partial_{i+2}
F[q]^m_{\tau}(t)=0
\end{equation}
for $t_{2}-\tau\leq t \leq t_{2}\,.$
\end{corollary}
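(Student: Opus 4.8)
The plan is to derive the differential form \eqref{eq:ELdeordm}--\eqref{eq:ELdeordm1} from the integral identities \eqref{eq:ELdeordmInt}--\eqref{eq:ELdeordmInt1} of Theorem~\ref{Thm:ELdeordm} by differentiating them exactly $m$ times and exploiting that $p$ is a polynomial of degree $m-1$. To keep the index bookkeeping transparent, for $t_1\leq t\leq t_2-\tau$ abbreviate the bracketed integrand by
\[
\Psi_i(t):=\partial_{i+2}F[q]^m_{\tau}(t)+\partial_{i+m+3}F[q]^m_{\tau}(t+\tau),\qquad i=0,\dots,m,
\]
and let $\mathcal{I}^{k}$ denote the $k$-fold iterated integral $\int_{t_2-\tau}^{t}\int_{t_2-\tau}^{s_1}\cdots$ (with $\mathcal{I}^{0}$ the identity), so that the left-hand side of \eqref{eq:ELdeordmInt} reads $\sum_{i=0}^{m}(-1)^{m-i-1}\,\mathcal{I}^{m-i}\Psi_i$.

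First I would record the elementary rule $\frac{d}{dt}\mathcal{I}^{k}\Psi_i=\mathcal{I}^{k-1}\Psi_i$ for $k\geq 1$, which is simply the fundamental theorem of calculus: each $\Psi_i$ lies in $L^{\infty}\subset L^{1}_{loc}$, and for $k\geq 2$ the inner iterated integral $\mathcal{I}^{k-1}\Psi_i$ is absolutely continuous, so the derivative exists and equals $\mathcal{I}^{k-1}\Psi_i$ (everywhere for $k\geq 2$, almost everywhere for $k=1$). Differentiating the left-hand side of \eqref{eq:ELdeordmInt} a total of $m$ times then strips the $m-i$ integrations off the $i$-th term after $m-i$ steps and leaves $i$ further derivatives acting on $\Psi_i$, while the right-hand side $p(t)=c_0+\dots+c_{m-1}t^{m-1}$ is annihilated since $p^{(m)}\equiv 0$. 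This yields
\[
\sum_{i=0}^{m}(-1)^{m-i-1}\frac{d^{i}}{dt^{i}}\Psi_i(t)=0 .
\]

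The only remaining point is the sign normalization: from $(-1)^{m-i-1}=(-1)^{m-1}(-1)^{i}$ one factors the constant $(-1)^{m-1}$ out of the whole sum and divides through, turning the last display into $\sum_{i=0}^{m}(-1)^{i}\frac{d^{i}}{dt^{i}}\Psi_i(t)=0$; unfolding $\Psi_i$ gives exactly \eqref{eq:ELdeordm}. The same argument applied to \eqref{eq:ELdeordmInt1}, with $\Psi_i$ replaced by $\partial_{i+2}F[q]^m_{\tau}$ alone (the region $t_2-\tau\leq t\leq t_2$ carries no delay-advanced term), produces \eqref{eq:ELdeordm1}.

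The step I expect to be the main obstacle is not the computation but its justification in the nonsmooth class $\mathbb{W}^{m,\infty}$: the summands with $i\geq 1$ formally require differentiating $\Psi_i$—which involves $q^{(m)}\in L^{\infty}$ and need not be classically differentiable—up to $i$ additional times. This is nevertheless legitimate because the full left-hand side coincides with the smooth polynomial $p$, so the \emph{combination} is $C^{\infty}$ even though the individual terms are not; the cleanest reading is therefore that \eqref{eq:ELdeordm}--\eqref{eq:ELdeordm1} hold in the distributional sense (equivalently, almost everywhere), with the integral identities \eqref{eq:ELdeordmInt}--\eqref{eq:ELdeordmInt1} retained as the rigorous statement from which the differential form is deduced.
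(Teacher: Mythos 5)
Your proposal is correct and follows essentially the same route as the paper, whose entire proof is the one-line observation that \eqref{eq:ELdeordm}--\eqref{eq:ELdeordm1} follow by applying the derivative of order $m$ to \eqref{eq:ELdeordmInt}--\eqref{eq:ELdeordmInt1}. Your additional care with the sign factor $(-1)^{m-i-1}=(-1)^{m-1}(-1)^{i}$ and with the distributional/a.e.\ interpretation needed in the class $\mathbb{W}^{m,\infty}$ is a worthwhile refinement of the paper's terse argument, not a departure from it.
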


\begin{proof}
We obtain \eqref{eq:ELdeordm} and \eqref{eq:ELdeordm1} applying the
derivative of order $m$ to \eqref{eq:ELdeordmInt} and
\eqref{eq:ELdeordmInt1}, respectively.
\end{proof}

\begin{remark}
If $m=1$ and $n=1$, then the higher-order Euler--Lagrange equations
\eqref{eq:ELdeordm}--\eqref{eq:ELdeordm1} reduce to
\eqref{eq:eldf11}.
\end{remark}

Associated to a given function
$q(\cdot)\in\mathbb{W}^{m,\infty}\left([t_1-\tau,t_2],
\mathbb{R}^{n}\right)$, it is convenient to introduce the following
quantities (\textrm{cf.} \cite{Torres:proper}):
\begin{equation}
\label{eq:eqprin}
\psi^{j}_1=\sum_{i=0}^{m-j}(-1)^{i}\frac{d^{i}}{dt^{i}}\Bigl(\partial_{i+j+2}
F[q]^m_{\tau}(t)+\partial_{i+j+m+3} F[q]^m_{\tau}(t+\tau)\Bigr)
\end{equation}
for $t_1\leq t\leq t_{2}-\tau$, and
\begin{equation}
\label{eq:eqprin11}
\psi^{j}_2=\sum_{i=0}^{m-j}(-1)^{i}\frac{d^{i}}{dt^{i}}\partial_{i+j+2}
F[q]^m_{\tau}(t)
\end{equation}
for $t_{2}-\tau\leq t\leq t_{2}$, where $j=0,\ldots,m$. These
operators are useful for our purposes because of the following
properties:
\begin{equation}
\label{eq:eqprin1}
\frac{d}{dt}\psi^{j}_1=\partial_{j+1}F[q]^m_{\tau}(t)
+\partial_{j+m+2}F[q]^m_{\tau}(t+\tau)-\psi^{j-1}_1
\end{equation}
for $t_1\leq t\leq t_{2}-\tau$, and
\begin{equation*}
\frac{d}{dt}\psi^{j}_2=\partial_{j+1}F[q]^m_{\tau}(t) -\psi^{j-1}_2
\end{equation*}
for $t_{2}-\tau\leq t\leq t_{2}$, where $j=1,\ldots,m$. We are now
in conditions to prove a higher-order DuBois--Reymond optimality
condition for isoperimetric problems with time delay.

\begin{theorem}[Isoprimetric higher-order delayed DuBois--Reymond condition]
\label{theo:cDRifm} If\\
$q(\cdot)\in\mathbb{W}^{m,\infty}\left([t_1-\tau,t_2],
\mathbb{R}^{n}\right)$ is a higher-order normal extremizer of
Problem~\ref{Pb1m} such that
\begin{equation}\label{CDUR1}
\sum_{j=0}^{m}
\partial_{j+m+3} F[q]^m_{\tau}(t+\tau)\cdot
q^{(j+1)}(t)=0
\end{equation}

for all $t\in[t_1-\tau,t_2-\tau]$, then it satisfies the following
conditions:
\begin{equation}
\label{eq:DBRordm} \frac{d}{dt}\left(F[q]^m_{\tau}(t)
-\sum_{j=1}^{m}\psi^{j}_1\cdot q^{(j)}(t)\right) =\partial_{1}
F[q]^m_{\tau}(t)
\end{equation}
for $t_1\leq t\leq t_{2}-\tau$ and
\begin{equation}
\label{eq:DBRordm:2} \frac{d}{dt}\left(F[q]^m_{\tau}(t)
-\sum_{j=1}^{m}\psi^{j}_2\cdot q^{(j)}(t)\right) =\partial_{1}
F[q]^m_{\tau}(t)
\end{equation}
for $t_{2}-\tau\leq t\leq t_{2}$, where $F$ is the augmented
Lagrangian \eqref{eq:aug:Lag} associated with Problem~\ref{Pb1m},
$\psi^{j}_1$ given by \eqref{eq:eqprin} and $\psi^{j}_2$ by
\eqref{eq:eqprin11}.
\end{theorem}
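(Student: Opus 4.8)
The plan is to establish both identities \eqref{eq:DBRordm} and \eqref{eq:DBRordm:2} by differentiating the bracketed ``DuBois--Reymond quantity'' directly and reducing everything to $\partial_1 F[q]^m_{\tau}(t)$. I would first record the total derivative of the augmented Lagrangian along $q(\cdot)$, namely
\begin{equation*}
\frac{d}{dt} F[q]^m_{\tau}(t) = \partial_1 F[q]^m_{\tau}(t) + \sum_{j=0}^m \partial_{j+2} F[q]^m_{\tau}(t)\cdot q^{(j+1)}(t) + \sum_{j=0}^m \partial_{j+m+3} F[q]^m_{\tau}(t)\cdot q^{(j+1)}(t-\tau),
\end{equation*}
splitting the contributions into the explicit term, the \emph{present} terms, and the \emph{delayed} terms.

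Focusing on the range $t_1 \le t \le t_2 - \tau$, I would next compute $\frac{d}{dt}\bigl(\sum_{j=1}^m \psi^j_1 \cdot q^{(j)}(t)\bigr)$ using the recursive property \eqref{eq:eqprin1}. Substituting $\frac{d}{dt}\psi^j_1 = \partial_{j+1}F[q]^m_{\tau}(t) + \partial_{j+m+2}F[q]^m_{\tau}(t+\tau) - \psi^{j-1}_1$ produces a telescoping sum in which the contributions $-\psi^{j-1}_1 \cdot q^{(j)}$ and $\psi^j_1 \cdot q^{(j+1)}$ collapse to $\psi^m_1 \cdot q^{(m+1)}(t) - \psi^0_1 \cdot q^{(1)}(t)$. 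The crucial observation is that $\psi^0_1$ is exactly the left-hand side of the higher-order Euler--Lagrange equation \eqref{eq:ELdeordm}, so $\psi^0_1 = 0$. Subtracting this expression from $\frac{d}{dt}F[q]^m_{\tau}(t)$, the present derivative terms cancel against the telescoped partials after a shift of summation index, and the top-order term $\partial_{m+2}F\cdot q^{(m+1)}$ cancels exactly against the matching piece of $\psi^m_1\cdot q^{(m+1)}$; thus no derivative of order $m+1$ survives, which is what renders the identity meaningful for merely $\mathbb{W}^{m,\infty}$ functions.

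What remains after these cancellations are precisely two groups of delayed terms, and here the hypothesis \eqref{CDUR1} enters twice. The group arising from the $(t+\tau)$-partials in \eqref{eq:eqprin1} together with the leftover $-\partial_{2m+3}F[q]^m_{\tau}(t+\tau)\cdot q^{(m+1)}(t)$ from $\psi^m_1$ reassembles, after reindexing $j \mapsto j-1$ in the former, into $-\sum_{j=0}^m \partial_{j+m+3}F[q]^m_{\tau}(t+\tau)\cdot q^{(j+1)}(t)$, which vanishes by \eqref{CDUR1} evaluated at $t$. The group coming from the delayed part of $\frac{d}{dt}F$ is $\sum_{j=0}^m \partial_{j+m+3}F[q]^m_{\tau}(t)\cdot q^{(j+1)}(t-\tau)$, which is \eqref{CDUR1} evaluated at the shifted time $t-\tau$; since $t-\tau \in [t_1-\tau,t_2-\tau]$ whenever $t\in[t_1,t_2]$, this too vanishes. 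Hence the whole expression collapses to $\partial_1 F[q]^m_{\tau}(t)$, yielding \eqref{eq:DBRordm}.

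Finally, for $t_2-\tau \le t \le t_2$ I would run the same computation with $\psi^j_2$ in place of $\psi^j_1$; this branch is simpler because the advanced $(t+\tau)$-arguments no longer appear (they fall outside $[t_1,t_2]$), so only the shifted application of \eqref{CDUR1} is needed to annihilate the delayed terms, while $\psi^0_2 = 0$ is now \eqref{eq:ELdeordm1}. I expect the main obstacle to be purely bookkeeping: keeping the shifts of summation index consistent so that the present-time partials telescope cleanly and the two delayed groups line up exactly with \eqref{CDUR1} at $t$ and at $t-\tau$, respectively. A secondary point requiring care is justifying the differentiation in the $\mathbb{W}^{m,\infty}$ setting, and this is precisely resolved by the fact that the order-$(m+1)$ derivatives cancel identically before any such derivative must be evaluated.
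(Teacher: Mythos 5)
Your proposal is correct and follows essentially the same route as the paper's proof: expand $\frac{d}{dt}F[q]^m_{\tau}(t)$ into explicit, present and delayed contributions, use the recursion \eqref{eq:eqprin1} to telescope $\frac{d}{dt}\sum_{j}\psi^{j}_1\cdot q^{(j)}(t)$ down to $\psi^{m}_1\cdot q^{(m+1)}(t)-\psi^{0}_1\cdot\dot{q}(t)$, invoke $\psi^{0}_1=0$ from the Euler--Lagrange equations \eqref{eq:ELdeordm}, and apply hypothesis \eqref{CDUR1} twice (once at $t$ to kill the advanced terms together with the $\partial_{2m+3}F$ piece of $\psi^m_1$, once at $t-\tau$ to kill the delayed terms). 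The only cosmetic difference is that the paper carries out the identical algebra under an integral $\int_{t_1}^{x}(\cdot)\,dt$ and concludes by the arbitrariness of $x$, whereas you differentiate pointwise.
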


\begin{proof}
We prove the theorem in the interval $t_{1}\leq t\leq t_{2}-\tau$.
The proof is similar for $t_{2}-\tau\leq t\leq t_{2}$. We derive
equation \eqref{eq:DBRordm} as follows:

Let an arbitrary  $x\in [t_1, t_2-\tau]\,.$ Note that
\begin{multline}
\label{pr} \int_{t_1}^{x}\frac{d}{dt}\left(F[q]^m_{\tau}(t)
-\sum_{j=1}^{m}\psi^{j}_1\cdot
q^{(j)}(t)\right)dt=\int_{t_1}^{x}\frac{d}{dt}\left(L[q]^m_{\tau}(t)-\lambda\cdot
g[q]^m_{\tau}(t)
-\sum_{j=1}^{m}\psi^{j}_1\cdot q^{(j)}(t)\right)dt\\
=\int_{t_1}^{x}\left(\partial_1 \left(L[q]^m_{\tau}(t)-\lambda\cdot
g[q]^m_{\tau}(t)\right)+\sum_{j=0}^{m}
\partial_{j+2} \left(L[q]^m_{\tau}(t)-\lambda\cdot
g[q]^m_{\tau}(t)\right)\cdot q^{(j+1)}(t)\right.\\
\left.-\sum_{j=1}^{m}\left(\dot{\psi}^{j}_1\cdot
q^{(j)}(t)+\psi^{j}_1\cdot q^{(j+1)}(t)\right)\right)dt\\
+\int_{t_1}^{x}\sum_{j=0}^{m}
\partial_{j+m+3} \left(L[q]^m_{\tau}(t)-\lambda\cdot
g[q]^m_{\tau}(t)\right)\cdot q^{(j+1)}(t-\tau)dt\,.
\end{multline}
Observe that, by hypothesis \eqref{CDUR1}, the last integral of
\eqref{pr} is null and using \eqref{eq:eqprin1}, the equation
\eqref{pr} becomes
\begin{multline}
\label{pr2}
\int_{t_1}^{x}\frac{d}{dt}\left(L[q]^m_{\tau}(t)-\lambda\cdot
g[q]^m_{\tau}(t) -\sum_{j=1}^{m}\psi^{j}_1\cdot
q^{(j)}(t)\right)dt\\
=\int_{t_1}^{x}\left[\partial_1 \left(L[q]^m_{\tau}(t)-\lambda\cdot
g[q]^m_{\tau}(t)\right)+\sum_{j=0}^{m}
\partial_{j+2} \left(L[q]^m_{\tau}(t)-\lambda\cdot
g[q]^m_{\tau}(t)\right)\cdot q^{(j+1)}(t)\right.\\ \left.
-\sum_{j=1}^{m}\left(\left(\partial_{j+1}L[q]^m_{\tau}(t)
+\partial_{j+m+2}L[q]^m_{\tau}(t+\tau)-\psi^{j-1}_1\right)\cdot
q^{(j)}(t)+\psi^{j}_1\cdot q^{(j+1)}(t)\right)\right]dt.
\end{multline}
We now simplify the second term on the right-hand side of
\eqref{pr2}:
\begin{multline}
\label{pr3} \sum_{j=1}^{m}\left(\left(\partial_{j+1}L[q]^m_{\tau}(t)
+\partial_{j+m+2}L[q]^m_{\tau}(t+\tau)-\psi^{j-1}_1\right)\cdot
q^{(j)}(t)+\psi^{j}_1\cdot q^{(j+1)}(t)\right)\\
=\sum_{j=0}^{m-1}\Bigl(\left(\partial_{j+2}L[q]^m_{\tau}(t)
+\partial_{j+m+3}L[q]^m_{\tau}(t+\tau)-\psi^{j}_1\right)
\cdot q^{(j+1)}(t)+\psi^{j+1}_1\cdot q^{(j+2)}(t)\Bigr)\\
=\sum_{j=0}^{m-1}\left[\left(\partial_{j+2}L[q]^m_{\tau}(t)
+\partial_{j+m+3}L[q]^m_{\tau}(t+\tau)\right)\cdot
q^{(j+1)}(t)\right] -\psi^{0}_1\cdot \dot{q}(t)+\psi^{m}_1\cdot
q^{(m+1)}(t).
\end{multline}
Substituting \eqref{pr3} into \eqref{pr2} and using the higher-order
Euler--Lagrange equations with time delay \eqref{eq:ELdeordm}, and
since, by
definition,
\begin{equation*}
\psi^{m}_1=\partial_{m+2}L[q]^m_{\tau}(t)+\partial_{2m+3}L[q]^m_{\tau}(t+\tau)\,,
\end{equation*}

\begin{equation*}
\psi^0_1=
\sum_{i=0}^{m}(-1)^{i}\frac{d^{i}}{dt^{i}}\Bigl(\partial_{i+2}
L[q]^m_{\tau}(t)+\partial_{i+m+3} L[q]^m_{\tau}(t+\tau)\Bigr)=0
\end{equation*}
and by hypothesis \eqref{CDUR1}
\begin{equation*}
\sum_{j=0}^{m-1} \partial_{j+m+3}L[q]^m_{\tau}(t+\tau)\cdot
q^{(j+1)}(t)=-\partial_{2m+3}L[q]^m_{\tau}(t+\tau)\cdot
q^{(m+1)}(t)\,,
\end{equation*}
we conclude that
\begin{multline*}
\int_{t_1}^{x}\frac{d}{dt}\left(L[q]^m_{\tau}(t)
-\sum_{j=1}^{m}\psi^{j}_1\cdot q^{(j)}(t)\right)dt\\
=\int_{t_1}^{x}\left[\partial_1 L[q]_{\tau}^m(t)
+\left(\partial_{m+2}L[q]^m_{\tau}(t)+\partial_{2m+3}
L[q]^m_{\tau}(t+\tau)\right)\cdot q^{(m+1)}\right.\\
\left. +\psi^{0}_1\cdot \dot{q}(t)-\psi^{m}_1\cdot
q^{(m+1)}(t)\right]dt =\int_{t_1}^{x}\partial_1
L[q]_{\tau}^m(t)dt\,.
\end{multline*}

We finally obtain \eqref{eq:DBRordm} by the arbitrariness
$x\in[t_{1},t_{2}-\tau]\,.$
\end{proof}

In the particular case when $m=1$ and $n=1$, we obtain from
Theorem~\ref{theo:cDRifm} an extension of Theorem~\ref{theo:cdrnd}
to the class of Lipschitz functions.

\begin{corollary}[Nonsmooth isoperimetric DuBois--Reymond conditions]
\label{cor:DR:m1}  If \\ $q(\cdot)\in
Lip\left([t_1-\tau,t_2];\mathbb{R}\right)$ is a normal
isoperimetric extremals with time delay, then the DuBois--Reymond
conditions with time delay \eqref{eq:cdrnd} and \eqref{eq:cdrnd1}
hold true.
\end{corollary}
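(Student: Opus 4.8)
The plan is to obtain Corollary~\ref{cor:DR:m1} as the direct specialization of Theorem~\ref{theo:cDRifm} to the scalar, first-order case $m=1$, $n=1$, with no new estimate required. First I would match the hypotheses: a function in $Lip\left([t_1-\tau,t_2];\mathbb{R}\right)$ is precisely an element of $\mathbb{W}^{1,\infty}\left([t_1-\tau,t_2];\mathbb{R}\right)$, as recorded at the start of Section~\ref{sec:MRHO}, and for $m=1$ the higher-order operator $[\cdot]^1_{\tau}$ collapses to the operator $[\cdot]_{\tau}$ of Section~\ref{sec:prelim}, so that $F[q]^1_{\tau}(t)=F[q]_{\tau}(t)$. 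A normal isoperimetric extremal with time delay in the sense of Definitions~\ref{def:extr} and~\ref{def:scale:ext} is then exactly a higher-order normal extremizer of Problem~\ref{Pb1m} at $m=1$, so Theorem~\ref{theo:cDRifm} applies and the task reduces to reading off what its conclusion says in this case.

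The second step is the explicit evaluation of the auxiliary operators $\psi^j_1$ and $\psi^j_2$ at $m=1$. From \eqref{eq:eqprin}, taking $m=1$ forces $j=1$ and leaves only the index $i=0$ in the sum, whence $\psi^1_1=\partial_{3}F[q]_{\tau}(t)+\partial_{5}F[q]_{\tau}(t+\tau)$; similarly \eqref{eq:eqprin11} gives $\psi^1_2=\partial_{3}F[q]_{\tau}(t)$. Consequently the single surviving term of $\sum_{j=1}^{m}\psi^{j}_1\cdot q^{(j)}(t)$ is $\dot{q}(t)\cdot\bigl(\partial_{3}F[q]_{\tau}(t)+\partial_{5}F[q]_{\tau}(t+\tau)\bigr)$, and likewise $\sum_{j=1}^{m}\psi^{j}_2\cdot q^{(j)}(t)=\dot{q}(t)\cdot\partial_{3}F[q]_{\tau}(t)$. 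Substituting these into the conclusions \eqref{eq:DBRordm} and \eqref{eq:DBRordm:2} reproduces \eqref{eq:cdrnd} and \eqref{eq:cdrnd1} verbatim, which is the assertion of the corollary.

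Third, I would verify that the scalar auxiliary hypothesis \eqref{CDUR1} of Theorem~\ref{theo:cDRifm} collapses consistently: at $m=1$ the sum over $j\in\{0,1\}$ produces $\partial_{4}F[q]_{\tau}(t+\tau)\cdot\dot{q}(t)+\partial_{5}F[q]_{\tau}(t+\tau)\cdot\ddot{q}(t)$, i.e.\ exactly condition \eqref{CDUR}. The work is therefore purely a matter of index bookkeeping, and the only point that deserves genuine care is precisely this bookkeeping: checking that the shifted partial-derivative indices $i+j+2$ and $i+j+m+3$ in \eqref{eq:eqprin} land on the slots $3$ and $5$ of the collapsed operator $[\cdot]_{\tau}$, and observing that the companion condition \eqref{CDUR} is \emph{inherited} from \eqref{CDUR1} rather than assumed anew. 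Beyond this, nothing further is needed over Theorem~\ref{theo:cDRifm}.
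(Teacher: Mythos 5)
Your proposal is correct and follows essentially the same route as the paper's own proof: specialize Theorem~\ref{theo:cDRifm} to $m=1$, $n=1$, compute $\psi^1_1=\partial_3F[q]_\tau(t)+\partial_5F[q]_\tau(t+\tau)$ and $\psi^1_2=\partial_3F[q]_\tau(t)$, and note that \eqref{CDUR1} collapses to \eqref{CDUR}. Your explicit remark that \eqref{CDUR} is inherited as a hypothesis (rather than being dispensable) is a point the paper's proof also makes, if only implicitly, since the corollary's statement omits it.
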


\begin{proof}
For $m=1$, the hypothesis \eqref{CDUR1} is reduced to \eqref{CDUR},
condition \eqref{eq:DBRordm} to
\begin{equation}
\label{eq:DBRordm111} \frac{d}{dt}\left(F[q]_{\tau}(t)
-\psi^{1}_1\cdot \dot{q}(t)\right) =\partial_{1} F[q]_{\tau}(t)
\end{equation}
for $t_1\leq t\leq t_{2}-\tau$, and \eqref{eq:DBRordm:2} to
\begin{equation}
\label{eq:DBRordm121} \frac{d}{dt}\left(F[q]_{\tau}(t)
-\psi^{1}_2\cdot \dot{q}(t)\right) =\partial_{1} F[q]_{\tau}(t)
\end{equation}
for $t_{2}-\tau\leq t\leq t_{2}$. Keeping in mind  \eqref{eq:eqprin}
and \eqref{eq:eqprin11}, we obtain
\begin{equation}
\label{eq:DBRord3} \psi^{1}_1=\partial_3 F[q]_{\tau}(t)+\partial_5
F[q]_{\tau}(t+\tau)
\end{equation}
and
\begin{equation}
\label{eq:DBRord4} \psi^{1}_2=\partial_3 F[q]_{\tau}(t).
\end{equation}
One finds the intended equalities \eqref{eq:cdrnd} and
\eqref{eq:cdrnd1} by substituting the quantities \eqref{eq:DBRord3}
and \eqref{eq:DBRord4} into \eqref{eq:DBRordm111} and
\eqref{eq:DBRordm121}, respectively.
\end{proof}


\subsection{Isoperimetric higher-order Noether's symmetry theorem with time delay}
\noindent
 Now, we generalize the Noether-type theorem
(Theorem~\ref{theo:tnnd}) to the more general case of delayed
isoperimetric variational problems with higher-order derivatives.

We know (see Section~\ref{HOEL}) that by using the Lagrange
multiplier rule, Problem~\ref{Pb1m} is equivalent to the following
augmented problem: to minimize
\begin{equation}
\label{agp1}
\begin{split}
J^{\tau}_{m}[q(\cdot),\lambda]
&= \int_{t_1}^{t_2} F[q,\lambda]_{\tau}^{m}(t)dt\\
&:=\int_{t_1}^{t_2} \left[L[q]_{\tau}^{m}(t) -\lambda \cdot
g[q]_{\tau}^{m}(t)\right] dt
\end{split}
\end{equation}
subject to boundary conditions \eqref{Pe2} and
$q^{(i)}(t_2)=q_{t_2}^i,~i=1,\dots,m-1$.

The notion of variational invariance for Problem~\ref{Pb1m} is
defined with the help of the equivalent augmented Lagrangian
\eqref{agp1}.

\begin{definition}[Invariance of \eqref{agp1} up to a gauge-term]
\label{def:invaifm} Consider the $s$-parameter group of
infinitesimal transformations \eqref{eq:tinf}. Functional
\eqref{agp1} is invariant under \eqref{eq:tinf} up to the gauge-term
$\Phi$ if
\begin{multline}
\label{eq:invndm} \int_{I} \dot{\Phi}[q]^m_{\tau}(t)dt =
\frac{d}{ds} \int_{\bar{t}(I)} F\left(\bar{t},\bar{q}(\bar{t}),
{\bar{q}}'(\bar{t}),\ldots,\bar{q}^{(m)}(\bar{t}),\right.\\
\left.\left.\bar{q}(\bar{t}-\tau),{\bar{q}}'(\bar{t}-\tau),
\ldots,\bar{q}^{(m)}(\bar{t}-\tau)\right)
(1+s\dot{\eta}(t,q(t))dt\right|_{s=0}
\end{multline}
for any  subinterval $I \subseteq [t_1,t_2]$ and for all
$q(\cdot)\in \mathbb{W}^{m,\infty}\left([t_1-\tau,t_2],
\mathbb{R}^{n}\right)$.
\end{definition}

\begin{remark}
Expressions $\dot{\Phi}$ and $\bar{q}^{(i)}$ in equation
\eqref{eq:invndm}, $i=1,\ldots,m$, are interpreted as
\begin{equation}
\label{eq:invifm1} \dot{\Phi}=\frac{d}{dt}\Phi\,\,,\quad\bar{q}'
=\frac{d\bar{q}}{d\bar{t}}=\frac{\frac{d\bar{q}}{dt}}{\frac{d\bar{t}}{dt}}\,\,,
\quad \bar{q}^{(i)}=\frac{d^{i}\bar{q}}{d\bar{t}^{i}}=
\frac{\frac{d}{dt}\left(\frac{d^{i-1}}{d\bar{t}^{i-1}}\bar{q}\right)}{\frac{d\bar{t}}{dt}},\,
i=2,\ldots,m.
\end{equation}
\end{remark}

The next lemma gives a necessary condition of invariance for
functional \eqref{agp1}.

\begin{lemma}[Necessary condition of invariance for \eqref{agp1}]
\label{thm:cnsi} If functional \eqref{agp1} is invariant up to the
gauge-term $\Phi$ under the $s$-parameter group of infinitesimal
transformations \eqref{eq:tinf}, then
\begin{multline}
\label{eq:cnsiifm}
\int_{t_1}^{t_2-\tau}\left[-\dot{\Phi}[q]^m_{\tau}(t)+\partial_{1}
F[q]^m_{\tau}(t)\eta(t,q)+ F[q]^m_{\tau}(t) \dot{\eta}(t,q)\right.\\
\left. +\sum_{i=0}^{m}\left(\partial_{i+2}
F[q]^m_{\tau}(t)+\partial_{i+m+3} F[q]^m_{\tau}(t+\tau)\right)\cdot
\rho^{i}(t) \right]dt =0
\end{multline}
for $t_1\leq t\leq t_{2}-\tau$ and
\begin{equation}
\label{eq:cnsiifm11}
\int_{t_2-\tau}^{t_2}\left[-\dot{\Phi}[q]^m_{\tau}(t)+\partial_{1}
F[q]^m_{\tau}(t)\eta(t,q)+ F[q]^m_{\tau}(t) \dot{\eta}(t,q) +
\sum_{i=0}^{m}\partial_{i+2} F[q]^m_{\tau}(t)\cdot
\rho^{i}(t)\right]dt =0
\end{equation}
for $t_2-\tau\leq t\leq t_{2}$, where
\begin{equation}
\label{eq:cnsiifm1}
\begin{cases}
\rho^{0}(t)=\xi(t,q) \, , \\
\rho^{i}(t)=\frac{d}{dt}\left(\rho^{i-1}(t)\right)-q^{(i)}(t)\dot{\eta}(t,q)\,
, \quad i=1,\ldots,m.
\end{cases}
\end{equation}
\end{lemma}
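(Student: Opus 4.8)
The plan is to start from the invariance identity \eqref{eq:invndm}, differentiate its right-hand side in $s$ at $s=0$, and then reorganize the result so that the delayed slots are shifted forward in time. First I would fix a subinterval $I\subseteq[t_1,t_2]$ and differentiate under the integral sign. Since at $s=0$ the transformation \eqref{eq:tinf} is the identity, the Jacobian factor $(1+s\dot\eta)$ contributes $F[q]^m_\tau(t)\,\dot\eta(t,q)$, the explicit time dependence through $\bar t=t+s\eta+o(s)$ contributes $\partial_1 F[q]^m_\tau(t)\,\eta(t,q)$, and each state slot contributes $\partial_{i+2}F[q]^m_\tau(t)$ (respectively $\partial_{i+m+3}F[q]^m_\tau(t)$ for the delayed slots) multiplied by the $s$-variation of the corresponding transformed derivative $\bar q^{(i)}(\bar t)$ (respectively $\bar q^{(i)}(\bar t-\tau)$).

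The first key step is to identify these variations as the quantities $\rho^i$ of \eqref{eq:cnsiifm1}, namely $\frac{\partial}{\partial s}\bar q^{(i)}(\bar t)\big|_{s=0}=\rho^i(t)$ and $\frac{\partial}{\partial s}\bar q^{(i)}(\bar t-\tau)\big|_{s=0}=\rho^i(t-\tau)$. I would prove this by induction on $i$ using the recursive interpretation \eqref{eq:invifm1}: the base case $i=0$ is immediate from $\bar q=q+s\xi+o(s)$, giving $\rho^0=\xi$; for the step I differentiate $\bar q^{(i)}=\bigl(\frac{d}{dt}\bar q^{(i-1)}\bigr)/(1+s\dot\eta)$ in $s$, apply the quotient rule, and evaluate at $s=0$, where $\bar q^{(i-1)}|_{s=0}=q^{(i-1)}$ and $\frac{\partial}{\partial s}\bar q^{(i-1)}|_{s=0}=\rho^{i-1}$ by hypothesis. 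This reproduces exactly $\rho^i=\frac{d}{dt}\rho^{i-1}-q^{(i)}\dot\eta$. Substituting then yields, over $I$, the current terms $\sum_i\partial_{i+2}F[q]^m_\tau(t)\,\rho^i(t)$ together with the delayed terms $\sum_i\partial_{i+m+3}F[q]^m_\tau(t)\,\rho^i(t-\tau)$.

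The second, and main, step is to remove the delay from the last sum. I would apply the linear change of variable $t\mapsto t+\tau$ to the delayed integral, converting $\partial_{i+m+3}F[q]^m_\tau(t)\,\rho^i(t-\tau)$ into $\partial_{i+m+3}F[q]^m_\tau(t+\tau)\,\rho^i(t)$ over the shifted range, and then use the boundary prescription \eqref{Pe2}: since $q\equiv\delta$ is fixed on $[t_1-\tau,t_1]$ the generator obeys $\xi=0$ there, so $\rho^i\equiv 0$ on $[t_1-\tau,t_1]$, which lets me discard the portion of the shifted integral over $[t_1-\tau,t_1]$. As the forward image $t+\tau$ must not exceed $t_2$, the shifted delayed contribution is supported on $[t_1,t_2-\tau]$; splitting $[t_1,t_2]=[t_1,t_2-\tau]\cup[t_2-\tau,t_2]$ therefore pairs the delayed coefficient $\partial_{i+m+3}F[q]^m_\tau(t+\tau)$ with the current coefficient $\partial_{i+2}F[q]^m_\tau(t)$ only on the first subinterval, leaving the current coefficient alone on the second. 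Reading off the two regional integrands and invoking the arbitrariness of $I$ gives \eqref{eq:cnsiifm} and \eqref{eq:cnsiifm11}.

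The hard part is precisely this delayed bookkeeping: one must track how the change of variables shifts the limits of integration, justify from the fixed history $q\equiv\delta$ that $\rho^i$ vanishes on $[t_1-\tau,t_1]$, and verify that it is the \emph{forward} shift that matches $\partial_{i+m+3}F[q]^m_\tau(t+\tau)$ with $\rho^i(t)$ on $[t_1,t_2-\tau]$ while leaving $[t_2-\tau,t_2]$ free of delayed contributions. The inductive identification of $\rho^i$ with the $s$-variation of $\bar q^{(i)}$, although routine once the recursion \eqref{eq:invifm1} is in place, is the other point demanding care, chiefly in handling the quotient rule and the $o(s)$ remainders consistently.
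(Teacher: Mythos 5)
Your proposal follows essentially the same route as the paper's own proof: differentiate the invariance identity \eqref{eq:invndm} in $s$ at $s=0$, identify the variations of $\bar q^{(i)}$ with the $\rho^i$ of \eqref{eq:cnsiifm1} via the recursion \eqref{eq:invifm1}, shift the delayed terms by the change of variables $t=\sigma+\tau$ using $\xi=\eta=0$ on $[t_1-\tau,t_1]$, split $[t_1,t_2]$ at $t_2-\tau$, and conclude by the arbitrariness of $I$. The argument is correct and matches the paper's proof step for step.
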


\begin{proof}
Without loss of generality, we take $I=[t_1,t_2]$. Then,
\eqref{eq:invndm} is equivalent to
\begin{multline}\label{mm}
\int_{t_1}^{t_2}\left[-\dot{\Phi}[q]_{\tau}(t)+\partial_{1}\left(L[q]_{\tau}^{m}(t)
-\lambda \cdot g[q]_{\tau}^{m}(t)\right)\eta(t,q)\right.\\
\left. +\sum_{i=0}^{m}\partial_{i+2} \left(L[q]_{\tau}^{m}(t)
-\lambda \cdot
g[q]_{\tau}^{m}(t)\right)\cdot\frac{\partial}{\partial s}
\left.\left(\frac{d^{i}\bar{q}}{d\bar{t}^{i}}\right)\right|_{s=0}\right.\\
\left. +\sum_{i=0}^{m}\partial_{i+m+3} \left(L[q]_{\tau}^{m}(t)
-\lambda \cdot
g[q]_{\tau}^{m}(t)\right)\cdot\frac{\partial}{\partial s}
\left.\left(\frac{d^{i}\bar{q}(\bar{t}-\tau)}{d(\bar{t}-\tau)^{i}}\right)\right|_{s=0}
\right.\\\left.+\left(L[q]_{\tau}^{m}(t) -\lambda \cdot
g[q]_{\tau}^{m}(t)\right) \dot{\eta} \right] =0.
\end{multline}
Using the fact that \eqref{eq:invifm1} implies
\begin{equation*}
\frac{\partial}{\partial
s}\left.\left(\frac{d\bar{q}(\bar{t})}{d\bar{t}}\right)\right|_{s=0}
=\dot{\xi}(t,q)-\dot{q}\dot{\eta}(t,q) \, ,
\end{equation*}
\begin{equation*}
\frac{\partial}{\partial s}\left.\left(
\frac{d^{i}\bar{q}(\bar{t})}{d\bar{t}^{i}}\right)\right|_{s=0}
=\frac{d}{dt}\left[\frac{\partial}{\partial s}
\left.\left(\frac{d^{i-1}\bar{q}(\bar{t})}{d\bar{t}^{i-1}}\right)\right|_{s=0}\right]
-q^{(i)}(t)\dot{\eta}(t,q)\, , \quad i=2,\ldots,m,
\end{equation*}
then equation \eqref{mm} becomes
\begin{multline}
\label{mm1}
\int_{t_1}^{t_2}\left[-\dot{\Phi}[q]^m_{\tau}(t)+\partial_{1}
\left(L[q]_{\tau}^{m}(t) -\lambda \cdot
g[q]_{\tau}^{m}(t)\right)\eta(t,q)+ \left(L[q]_{\tau}^{m}(t)
-\lambda \cdot g[q]_{\tau}^{m}(t)\right) \dot{\eta}(t,q)\right.\\
\left. +\sum_{i=0}^{m}\partial_{i+2} \left(L[q]_{\tau}^{m}(t)
-\lambda \cdot
g[q]_{\tau}^{m}(t)\right)\cdot\rho^{i}(t)\right.\\\left.+\sum_{i=0}^{m}\partial_{i+m+3}
\left(L[q]_{\tau}^{m}(t) -\lambda \cdot
g[q]_{\tau}^{m}(t)\right)\cdot \rho^{i}(t-\tau) \right]dt =0.
\end{multline}
Performing the linear change of variables $t=\sigma+\tau$ in the
last integral of \eqref{mm1}, and keeping in mind that $\xi=\eta= 0$
on $[t_1-\tau,t_1]$, equation \eqref{mm1} becomes
\begin{multline}
\label{mm2}
\int_{t_1}^{t_2-\tau}\left[-\dot{\Phi}[q]^m_{\tau}(t)+\partial_{1}
\left(L[q]_{\tau}^{m}(t) -\lambda \cdot
g[q]_{\tau}^{m}(t)\right)\eta(t,q)+ \left(L[q]_{\tau}^{m}(t)
-\lambda \cdot g[q]_{\tau}^{m}(t)\right) \dot{\eta}(t,q)\right.\\
\left. +\sum_{i=0}^{m}\left(\partial_{i+2} \left(L[q]_{\tau}^{m}(t)
-\lambda \cdot g[q]_{\tau}^{m}(t)\right)+\partial_{i+m+3}
\left(L[q]_{\tau}^{m}(t+\tau) -\lambda \cdot
g[q]_{\tau}^{m}(t+\tau)\right)\right)\cdot
\rho^{i}(t) \right]dt \\
+ \int_{t_2-\tau}^{t_2}\left[-\dot{\Phi}[q]^m_{\tau}(t)+\partial_{1}
\left(L[q]_{\tau}^{m}(t) -\lambda \cdot
g[q]_{\tau}^{m}(t)\right)\eta(t,q)+ \left(L[q]_{\tau}^{m}(t)
-\lambda \cdot g[q]_{\tau}^{m}(t)\right) \dot{\eta}(t,q)\right.\\
\left. +\sum_{i=0}^{m}\partial_{i+2} \left(L[q]_{\tau}^{m}(t)
-\lambda \cdot g[q]_{\tau}^{m}(t)\right)\cdot \rho^{i}(t)\right]dt
=0.
\end{multline}
Equations \eqref{eq:cnsiifm} and \eqref{eq:cnsiifm11} follow from
the fact that \eqref{mm2} holds for an arbitrary $I \subseteq
[t_1,t_2]$.
\end{proof}

\begin{definition}[Isoperimetric Higher-order constant of motion/isoperimetric conservation law with time delay]
\label{def:leicond2} A quantity
\begin{multline*}
C\{q\}_{\tau}^{m}(t) := C\Bigl(t,t+\tau,q(t),\dot{q}(t),
\ldots,q^{(m)}(t),q(t-\tau),\dot{q}(t-\tau),
\ldots,q^{(m)}(t-\tau),\\q(t+\tau),\dot{q}(t+\tau),
\ldots,q^{(m)}(t+\tau)\Bigr)
\end{multline*}
is a higher-order constant of motion with time delay $\tau$ if
\begin{equation}
\label{eq:conslaw:td} \frac{d}{dt} C\{q\}_{\tau}^{m}(t) = 0,
\end{equation}
$t\in [t_1,t_2]$, along any $q(\cdot)\in
\mathbb{W}^{m,\infty}\left([t_1-\tau,t_2], \mathbb{R}^{n}\right)$
satisfying both Theorem~\ref{Thm:ELdeordm} and
Theorem~\ref{theo:cDRifm}. The equality \eqref{eq:conslaw:td} is
then said to be a higher-order conservation law with time delay.
\end{definition}

\begin{theorem}[Isoperimetric higher-order Noether's symmetry theorem with time delay]
\label{thm:Noether} If functional \eqref{agp1} is invariant up to
the gauge-term $\Phi$ in the sense of Definition~\ref{def:invaifm}
such that satisfy condition \eqref{CDUR1}, then the quantity
$C\{q\}_{\tau}^{m}(t)$ defined by
\begin{equation}
\label{eq:TeNetm} \sum_{j=1}^{m}\psi^{j}_1\cdot
\rho^{j-1}(t)+\left(F[q]^m_{\tau}(t) -\sum_{j=1}^{m}\psi^{j}_1\cdot
q^{(j)}(t)\right)\eta(t,q) -\Phi[q]^m_{\tau}(t)
\end{equation}
for $t_1\leq t\leq t_{2}-\tau$ and by
\begin{equation}
\label{eq:TeNetm1} \sum_{j=1}^{m}\psi^{j}_2\cdot
\rho^{j-1}(t)+\left(F[q]^m_{\tau}(t) -\sum_{j=1}^{m}\psi^{j}_2\cdot
q^{(j)}(t)\right)\eta(t,q) -\Phi[q]^m_{\tau}(t)
\end{equation}
for $t_2-\tau\leq t\leq t_{2}$, is a higher-order constant of motion
with time delay (\textrm{cf.} Definition~\ref{def:leicond2})
satisfying the hypothesis \eqref{CDUR1}, where $\psi^{j}_1$ and
$\psi^{j}_2$ are given by \eqref{eq:eqprin} and \eqref{eq:eqprin11},
respectively.
\end{theorem}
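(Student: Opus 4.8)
The plan is to establish the conservation law only on $t_1 \leq t \leq t_{2}-\tau$, since the argument on $t_2-\tau \leq t \leq t_2$ is verbatim the same with $\psi^{j}_2$, \eqref{eq:eqprin11}, \eqref{eq:DBRordm:2} and \eqref{eq:cnsiifm11} playing the roles of their counterparts (and with the delayed partial derivatives $\partial_{i+m+3}F[q]^m_\tau(t+\tau)$ simply absent). First I would exploit the fact that the necessary condition of invariance in Lemma~\ref{thm:cnsi} holds for \emph{every} subinterval $I \subseteq [t_1,t_2]$: evaluating \eqref{eq:cnsiifm} on arbitrarily short intervals forces its integrand to vanish pointwise, i.e.
\[
-\dot\Phi[q]^m_\tau(t) + \partial_1 F[q]^m_\tau(t)\,\eta(t,q) + F[q]^m_\tau(t)\,\dot\eta(t,q) + \sum_{i=0}^m\Bigl(\partial_{i+2}F[q]^m_\tau(t)+\partial_{i+m+3}F[q]^m_\tau(t+\tau)\Bigr)\cdot\rho^i(t) = 0.
\]
The objective is then to show that this same expression equals $\frac{d}{dt}C\{q\}^m_\tau(t)$ for $C$ given by \eqref{eq:TeNetm}, whence \eqref{eq:conslaw:td} follows immediately.

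Next I would differentiate \eqref{eq:TeNetm} term by term; the crux is simplifying $\sum_{j=1}^m\bigl(\dot\psi^j_1\cdot\rho^{j-1} + \psi^j_1\cdot\dot\rho^{j-1}\bigr)$. For the first piece I substitute the structural identity \eqref{eq:eqprin1}, namely $\dot\psi^j_1 = \partial_{j+1}F[q]^m_\tau(t) + \partial_{j+m+2}F[q]^m_\tau(t+\tau) - \psi^{j-1}_1$; for the second I read $\dot\rho^{j-1} = \rho^j + q^{(j)}\dot\eta$ off the recursion \eqref{eq:cnsiifm1}. The two resulting $\psi$-against-$\rho$ sums, $-\sum_{j=1}^m\psi^{j-1}_1\cdot\rho^{j-1}$ and $\sum_{j=1}^m\psi^j_1\cdot\rho^j$, telescope to $-\psi^0_1\cdot\rho^0 + \psi^m_1\cdot\rho^m$, and here the higher-order Euler--Lagrange equation \eqref{eq:ELdeordm} kills the boundary term since $\psi^0_1 = 0$.

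I would then invoke the DuBois--Reymond condition \eqref{eq:DBRordm} of Theorem~\ref{theo:cDRifm} to replace $\frac{d}{dt}\bigl(F[q]^m_\tau - \sum_{j=1}^m\psi^j_1\cdot q^{(j)}\bigr)$ by $\partial_1 F[q]^m_\tau(t)$; the $\dot\eta$-weighted terms $\sum_{j=1}^m\psi^j_1\cdot q^{(j)}\dot\eta$ produced by $\dot\rho^{j-1}$ then cancel exactly against the $-\sum_{j=1}^m\psi^j_1\cdot q^{(j)}\dot\eta$ coming from differentiating the bracketed factor multiplying $\eta$. After re-indexing $j\mapsto i+1$ and folding the surviving $\psi^m_1\cdot\rho^m$ back into the sum — using that $\psi^m_1 = \partial_{m+2}F[q]^m_\tau(t) + \partial_{2m+3}F[q]^m_\tau(t+\tau)$ is precisely the $i=m$ term — what is left is $\sum_{i=0}^m\bigl(\partial_{i+2}F(t)+\partial_{i+m+3}F(t+\tau)\bigr)\cdot\rho^i + \partial_1 F\,\eta + F\,\dot\eta - \dot\Phi$, which is identically the vanishing integrand displayed above. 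Observe that the hypothesis \eqref{CDUR1} is exactly what licenses the appeal to Theorem~\ref{theo:cDRifm}, and that the same chain of substitutions with $\psi^j_2$ and \eqref{eq:DBRordm:2} yields the conservation of \eqref{eq:TeNetm1} on $t_2-\tau \leq t \leq t_2$.

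The main obstacle I anticipate is bookkeeping rather than concept: keeping the two telescoping sums aligned through the shift $j\mapsto i+1$, tracking the correct summation ranges, and confirming that every $\dot\eta$-term cancels. The genuinely structural inputs — the Euler--Lagrange relation $\psi^0_1=0$ and the DuBois--Reymond replacement — enter at isolated points, so the principal risk is an off-by-one error in the index shifts rather than any gap in the argument.
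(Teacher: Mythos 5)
Your proposal is correct and follows essentially the same route as the paper's proof: differentiate the candidate conserved quantity, use \eqref{eq:eqprin1} and \eqref{eq:cnsiifm1} to telescope, eliminate $\psi^0_1$ via the Euler--Lagrange equation \eqref{eq:ELdeordm}, substitute the DuBois--Reymond condition \eqref{eq:DBRordm}, and identify the result with the vanishing integrand of the necessary condition of invariance \eqref{eq:cnsiifm}. The only difference is presentational: you localize Lemma~\ref{thm:cnsi} to arbitrary subintervals to obtain pointwise vanishing up front, whereas the paper carries out the same chain of identities under the integral sign over $[t_1,t_2-\tau]$ and relies implicitly on the arbitrariness of the subinterval.
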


\begin{proof}
We prove the theorem in the interval $t_1\leq t\leq t_{2}-\tau$. The
proof is similar in the interval $t_2-\tau\leq t\leq t_{2}$.
Equation \eqref{eq:TeNetm} follows by direct calculations:
\begin{equation}
\label{eq:TeNetm2}
\begin{split}
0&=\int_{t_1}^{t_2-\tau}\frac{d}{dt}\left[\psi_1^1\cdot\rho^0+\sum_{j=2}^{m}\psi^{j}_1
\cdot \rho^{j-1}(t)\right.\\
&\left. +\left(L[q]_{\tau}^{m}(t) -\lambda \cdot g[q]_{\tau}^{m}(t)
-\sum_{j=1}^{m}\psi^{j}_1\cdot q^{(j)}(t)\right)\eta(t,q)
-\Phi[q]^m_{\tau}(t)\right]dt\\
&=\int_{t_1}^{t_2-\tau}\left[-\dot{\Phi}[q]^m_{\tau}(t)+\rho^0(t)\cdot\frac{d}{dt}\psi^1_1
+\psi^1_1\cdot\frac{d}{dt}\rho^0(t)
+\sum_{j=2}^{m}\left(\rho^{j-1}(t)\cdot\frac{d}{dt}\psi^{j}_1
+\psi^{j}_1\cdot\frac{d}{dt}\rho^{j-1}(t)\right)\right.\\
& \qquad \left.+\eta(t,q)\frac{d}{dt}\left(L[q]_{\tau}^{m}(t)
-\lambda \cdot g[q]_{\tau}^{m}(t)-\sum_{j=1}^{m}\psi_1^{j} \cdot
q^{(j)}(t)\right)\right.\\
&\left.+\left(L[q]_{\tau}^{m}(t) -\lambda \cdot
g[q]_{\tau}^{m}(t)-\sum_{j=1}^{m}\psi_1^{j} \cdot
q^{(j)}(t)\right)\dot{\eta}(t,q)\right]dt.
\end{split}
\end{equation}
Using the Euler--Lagrange equation \eqref{eq:ELdeordm}, the
DuBois--Reymond condition \eqref{eq:DBRordm}, and relations
\eqref{eq:eqprin1}  and \eqref{eq:cnsiifm1} into \eqref{eq:TeNetm2},
we obtain:
\begin{multline}
\label{eq:dems}
\int_{t_1}^{t_2-\tau}\left[-\dot{\Phi}[q]^m_{\tau}(t)
+\left(\partial_{2} \left(L[q]_{\tau}^{m}(t) -\lambda \cdot
g[q]_{\tau}^{m}(t)\right)\right.\right.\\
\left.\left.+\partial_{m+3} \left(L[q]_{\tau}^{m}(t+\tau) -\lambda
\cdot g[q]_{\tau}^{m}(t+\tau)\right)\right)\cdot\xi(t,q)+\psi^{1}_1
\cdot(\rho^1(t)+\dot{q}(t)\dot{\tau}(t,q))\right. \\
\left. +\sum_{j=2}^{m}\left[\left(\partial_{j+1}
\left(L[q]_{\tau}^{m}(t) -\lambda \cdot
g[q]_{\tau}^{m}(t)\right)+\partial_{j+m+2}
\left(L[q]_{\tau}^{m}(t+\tau) -\lambda \cdot
g[q]_{\tau}^{m}(t+\tau)\right)-\psi_1^{j-1}\right)\cdot\rho^{j-1}(t)\right.\right.\\
\left.\left.+ \psi_1^{j}\cdot\left(\rho^{j}(t)+q^{(j)}(t)
\dot{\tau}(t,q)\right)\right]\right.\\ \left. +\partial_{1}
\left(L[q]_{\tau}^{m}(t) -\lambda \cdot
g[q]_{\tau}^{m}(t)\right)\eta(t,q) +\left(L[q]_{\tau}^{m}(t)
-\lambda \cdot g[q]_{\tau}^{m}(t)-\sum_{j=1}^{m}\psi_1^{j}
\cdot q^{(j)}(t)\right)\dot{\eta}(t,q)\right]dt\\
=\int_{t_1}^{t_2-\tau}\Bigl[
\partial_{1} \left(L[q]_{\tau}^{m}(t) -\lambda \cdot
g[q]_{\tau}^{m}(t)\right)\eta(t,q)+\left(L[q]_{\tau}^{m}(t) -\lambda
\cdot g[q]_{\tau}^{m}(t)\right)\dot{\eta}(t,q)\\ +\left(\partial_{2}
\left(L[q]_{\tau}^{m}(t) -\lambda \cdot
g[q]_{\tau}^{m}(t)\right)+\partial_{m+3}
\left(L[q]_{\tau}^{m}(t+\tau) -\lambda \cdot
g[q]_{\tau}^{m}(t+\tau)\right)\right)\cdot\xi(t,q)\\
+\psi_1^{1}
\cdot(\rho^1(t)+\dot{q}(t)\dot{\eta}(t,q))-\psi_1^1\cdot\rho^1(t)
-\psi_1^1\cdot\dot{q}(t)\dot{\eta}(t,q)+\psi_1^m\cdot\rho^m(t)\\
+\sum_{j=2}^{m}\left(\partial_{j+1} \left(L[q]_{\tau}^{m}(t)
-\lambda \cdot g[q]_{\tau}^{m}(t)\right)+\partial_{j+m+2}
\left(L[q]_{\tau}^{m}(t+\tau) -\lambda \cdot
g[q]_{\tau}^{m}(t+\tau)\right)\right]\cdot\rho^{j-1}(t)\\
-\dot{\Phi}[q]^m_{\tau}(t)\Bigr]dt
= 0.
\end{multline}
Simplification of \eqref{eq:dems} leads to the necessary condition
of invariance \eqref{eq:cnsiifm}.
\end{proof}


\section{Noether's theorem for isoperimetric problems of the optimal control with time delay}
\label{sec:MRHO1} Using Theorem~\ref{theo:tnnd}, we obtain here a
Noether's theorem for the  isoperimetric optimal control problems
with time delay introduced in \cite{GFML2016}: to minimize
\begin{equation}
\label{Pond} \mathcal{I}^{\tau}[q(\cdot),u(\cdot)] =
\int_{t_1}^{t_2} L\left(t,q(t),u(t),q(t-\tau),u(t-\tau)\right) dt
\end{equation}
subject to the delayed control system
\begin{equation}
 \label{ci}
\dot{q}(t)=\varphi\left(t,q(t),u(t),q(t-\tau),u(t-\tau)\right)\,,
\end{equation}

isoperimetric equality constraints
\begin{equation}
\label{CT2} \int_{t_1}^{t_2}
g\left(t,q(t),u(t),q(t-\tau),u(t-\tau)\right) dt=l,\,\,l\in
\mathbb{R}^k
\end{equation}
and initial condition
\begin{equation}
\label{ic} q(t)=\delta(t),\quad t\in[t_{1}-\tau,t_{1}],
\end{equation}
where $q(\cdot) \in C^{1}\left([t_1-\tau,t_2],
\mathbb{R}^{n}\right)$, $u(\cdot) \in C^{0}\left([t_1-\tau,t_2],
\mathbb{R}^{m}\right)$, the Lagrangian $L : [t_1,t_2] \times
\mathbb{R}^{n}\times\mathbb{R}^{m} \times
\mathbb{R}^{n}\times\mathbb{R}^{m} \rightarrow \mathbb{R}$ and the
velocity vector $\varphi : [t_1,t_2] \times \mathbb{R}^{n}\times
\mathbb{R}^{m} \times \mathbb{R}^{n} \times \mathbb{R}^{m}
\rightarrow \mathbb{R}^n$ are assumed to be $C^{1}$-functions with
respect to all their arguments, $t_{1} < t_{2}$ are fixed in
$\mathbb{R}$, and $\tau$ is a given positive real number such that
$\tau<t_{2}-t_{1}$. As before, we assume that $\delta$ is a given
piecewise smooth function.

\begin{remark}
In the particular case when $\varphi(t,q,u,q_\tau,u_\tau) = u$,
problem \eqref{Pond}--\eqref{ic} is reduced to the
Problem~\ref{Pb1}. The problems of the calculus of variations with
higher-order derivatives are also easily written in the optimal
control form \eqref{Pond}--\eqref{ic}. For example, the delayed
isoperimetric problem of the calculus of variations with derivatives
of second order,
\begin{equation}
\label{eq:pcvo2} I[q(\cdot)] = \int_{t_1}^{t_2}
F\left(t,q(t),\dot{q}(t),\ddot{q}(t),q(t-\tau),\dot{q}(t-\tau),\ddot{q}(t-\tau)\right)
\longrightarrow \min \, ,
\end{equation}
 is equivalent to problem
\begin{gather*}
I[q^0(\cdot),q^1(\cdot),u(\cdot)]
= \int_{t_1}^{t_2} F\left(t,q^0(t),q^1(t),u(t),q^0(t-\tau),q^1(t-\tau),u(t-\tau)\right) \longrightarrow \min \, , \\
\begin{cases}
\dot{q}^0(t) = q^1(t) \, , \\
\dot{q}^1(t) = u(t) \, ,\\
\dot{q}^0(t-\tau) = q^1(t-\tau) \, , \\
\dot{q}^1(t-\tau) = u(t-\tau) \, ,
\end{cases}
\end{gather*}
where $F=L-\lambda\cdot g$ and
$g=g\left(t,q(t),\dot{q}(t),\ddot{q}(t),q(t-\tau),\dot{q}(t-\tau),\ddot{q}(t-\tau)\right)\,.$
\end{remark}

\begin{notation}
We introduce the operators $[\cdot,\cdot]_{\tau}$ and
$[\cdot,\cdot,\cdot]_{\tau}$ defined by
\begin{equation*}
[q,u]_{\tau}(t)=\left(t,q(t),u(t),q(t-\tau),u(t-\tau)\right),
\end{equation*}
where $q(\cdot) \in C^{1}\left([t_1-\tau,t_2],
\mathbb{R}^{n}\right)$ and $u(\cdot) \in C^{0}\left([t_1-\tau,t_2],
\mathbb{R}^{m}\right)$; and
\begin{equation*}
[q,u,p,\lambda]_{\tau}(t)=\left(t,q(t),u(t),q(t-\tau),u(t-\tau),p(t),\lambda\right),
\end{equation*}
where $q(\cdot) \in C^{1}\left([t_1-\tau,t_2],
\mathbb{R}^{n}\right)$, $p(\cdot) \in C^{1}\left([t_1,t_2],
\mathbb{R}^{n}\right)$, $u(\cdot) \in C^{0}\left([t_1-\tau,t_2],
\mathbb{R}^{m}\right)$ and $\lambda\in\mathbb{R}^k$.
\end{notation}

\begin{definition}
The delayed differential control system \eqref{ci} is called an
\emph{isoperimetric control system with time delay}.
\end{definition}

\begin{definition}(Isoperimetric process with time delay)
An admissible pair $(q(\cdot),u(\cdot))$ that satisfies the
isoperimetric control system \eqref{ci} and the isoperimetric
constraints \eqref{CT2} is said to be a \emph{isoperimetric process
with time delay}.
\end{definition}

\begin{theorem}
\label{theo:pmpnd}(Isoperimetric Pontryagin maximum principle
\cite{GFML2016}) If $(q(\cdot),u(\cdot))$ is a minimizer of
\eqref{Pond}--\eqref{ic}, then there exists a covector function
$p(\cdot)\in C^{1}\left([t_1,t_2], \mathbb{R}^{n}\right)$ such that
for all $t\in[t_1-\tau,t_2]$ the following conditions hold:
\begin{itemize}
\item \emph{the isoperimetric Hamiltonian systems with time delay}
\begin{equation}
\label{eq:Hamnd}
\begin{cases}
\dot{q}(t)=\partial_6 H[q,u,p,\lambda]_{\tau}(t)\\
\dot{p}(t)=-\partial_2 H[q,u,p,\lambda]_{\tau}(t) -\partial_4
H[q,u,p,\lambda]_{\tau}(t+\tau)
\end{cases}
\end{equation}
for $t_{1}\leq t\leq t_{2}-\tau$, and
\begin{equation}
\label{eq:Hamnd2}
\begin{cases}
\dot{q}(t) = \partial_6 H[q,u,p,\lambda]_{\tau}(t)  \\
\dot{p}(t) = -\partial_2 H[q,u,p,\lambda]_{\tau}(t)
\end{cases}
\end{equation}
for $t_{2}-\tau\leq t\leq t_{2}$;

\item \emph{the isoperimetric stationary conditions with time delay}
\begin{equation}
\label{eq:CE}
\partial_3 H[q,u,p,\lambda]_{\tau}(t)+\partial_5 H[q,u,p,\lambda]_{\tau}(t+\tau)= 0
\end{equation}
for $t_{1}\leq t\leq t_{2}-\tau$, and
\begin{equation}
\label{eq:CE12}
 \partial_3 H[q,u,p]_{\tau}(t)= 0
\end{equation}
for $t_{2}-\tau\leq t\leq t_{2}$;
\end{itemize}
where the isoperimetric Hamiltonian $H$ is defined by
\begin{equation}
\label{eq:Hnd11} H[q,u,p,\lambda]_{\tau}(t) = L[q,u]_{\tau}(t)
-\lambda\cdot g[q,u]_{\tau}(t)+ p(t) \cdot \varphi[q,u]_{\tau}(t).
\end{equation}
\end{theorem}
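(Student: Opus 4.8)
The plan is to reduce \eqref{Pond}--\eqref{ic} to an unconstrained delayed problem by successively absorbing its two constraints into the cost, and then to read off \eqref{eq:Hamnd}--\eqref{eq:CE12} from a single first-variation computation. First I would append the isoperimetric constraint \eqref{CT2} through a Lagrange multiplier $\lambda\in\mathbb{R}^k$, exactly as in the passage from Problem~\ref{Pb1} to \eqref{agp}; under the normality hypothesis (the analogue of Definition~\ref{def:extr}, guaranteeing $\nabla\hat I\neq\mathbf{0}$ so that the abnormal case is excluded) the minimizer $(q(\cdot),u(\cdot))$ is an extremal of $\int_{t_1}^{t_2}\bigl(L[q,u]_\tau(t)-\lambda\cdot g[q,u]_\tau(t)\bigr)\,dt$ subject to \eqref{ci} and \eqref{ic}. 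Next I would adjoin the delayed control system \eqref{ci} by a $C^1$ covector $p(\cdot)$, so that the functional to be made stationary is
\begin{equation*}
\mathcal{J}=\int_{t_1}^{t_2}\Bigl[H[q,u,p,\lambda]_\tau(t)-p(t)\cdot\dot q(t)\Bigr]\,dt,
\end{equation*}
with $H$ the isoperimetric Hamiltonian \eqref{eq:Hnd11}. The term $p\cdot(\varphi-\dot q)$ vanishes on processes, and $\partial_6 H=\varphi$ already reproduces the first line of \eqref{eq:Hamnd}--\eqref{eq:Hamnd2}.

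The core of the argument is the first variation of $\mathcal{J}$. I would take admissible variations $q\mapsto q+\epsilon h$, $u\mapsto u+\epsilon k$ with $h\in C^1$, $k\in C^0$ vanishing on $[t_1-\tau,t_1]$ and $h(t_2)=0$, and differentiate at $\epsilon=0$, producing the integrand
\begin{equation*}
\partial_2 H\cdot h(t)+\partial_4 H\cdot h(t-\tau)+\partial_3 H\cdot k(t)+\partial_5 H\cdot k(t-\tau)-p(t)\cdot\dot h(t),
\end{equation*}
where all arguments are $[q,u,p,\lambda]_\tau(t)$. Integrating the last term by parts (the boundary contributions drop since $h$ vanishes at $t_1$ and $t_2$) replaces $-p\cdot\dot h$ by $\dot p\cdot h$, and the now-standard linear change of variables $t=\sigma+\tau$ in the delayed terms $\partial_4 H\cdot h(t-\tau)$ and $\partial_5 H\cdot k(t-\tau)$, together with $h=k=0$ on $[t_1-\tau,t_1]$, converts them into $\partial_4 H(\cdot+\tau)\cdot h$ and $\partial_5 H(\cdot+\tau)\cdot k$ at the forward-shifted time; this is precisely the mechanism already exploited in \eqref{pel}--\eqref{pel1} and \eqref{mm1}--\eqref{mm2}.

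Splitting the resulting integral at $t_2-\tau$ is what creates the two regimes in the statement: for $t\in[t_2-\tau,t_2]$ the shifted argument $t+\tau$ exceeds $t_2$, so the advanced terms $\partial_4 H(t+\tau)$ and $\partial_5 H(t+\tau)$ are absent, leaving the reduced forms \eqref{eq:Hamnd2} and \eqref{eq:CE12}. Since $h$ and $k$ range independently over admissible variations, the fundamental lemma of the calculus of variations (equivalently the DuBois--Reymond lemma invoked in the proof of Theorem~\ref{Thm:ELdeordm}) forces their coefficients to vanish separately: the coefficient of $h$ gives the adjoint equation $\dot p=-\partial_2 H-\partial_4 H(\cdot+\tau)$, i.e.\ the second line of \eqref{eq:Hamnd}, and the coefficient of $k$ gives the stationary condition \eqref{eq:CE}.

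I expect two delicate points. The main obstacle is the rigorous justification of the multiplier rule in this infinite-dimensional, constrained, delayed setting: one must verify the non-degeneracy that rules out abnormality and simultaneously yields a genuine covector rather than a merely distributional one, so that establishing $p\in C^1\bigl([t_1,t_2],\mathbb{R}^n\bigr)$ hinges on the regularity of $\partial_2 H$ and $\partial_4 H$ along the optimal process. The secondary technical care is the bookkeeping of the delay near the right endpoint, ensuring that the boundary terms from the integration by parts and from the shift $t=\sigma+\tau$ are accounted for consistently across the two subintervals so as to match \eqref{eq:Hamnd}--\eqref{eq:CE12} exactly.
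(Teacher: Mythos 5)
The paper does not prove this theorem: it is imported verbatim from \cite{GFML2016} and used as a black box, so there is no in-paper proof to compare against. Your reconstruction follows the route one would expect from the rest of the paper (Lagrange multiplier for the isoperimetric constraint via normality and the implicit function theorem, adjoining the dynamics with a covector, first variation, the shift $t=\sigma+\tau$ with $h=k=0$ on $[t_1-\tau,t_1]$, and the split at $t_2-\tau$ that switches off the advanced terms), and the bookkeeping of indices and of the two regimes is correct.

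There is, however, one genuine gap in the step where you conclude. After adjoining \eqref{ci} with $p(\cdot)$, the variations $h$ and $k$ are \emph{not} independent on the constrained problem: an admissible variation of the control induces a state variation $h$ that is slaved to $k$ through the linearized delayed control system (and in particular $h(t_2)$ is determined, not freely prescribable as $0$). So you cannot invoke the fundamental lemma to force the coefficients of $h$ and of $k$ to vanish ``separately''; as written, the adjoint equation is not a consequence of the vanishing first variation. The standard repair, which also settles your worry about existence and regularity of the covector, is to reverse the logic: \emph{define} $p$ as the solution of the linear adjoint system $\dot p(t)=-\partial_2 H[q,u,p,\lambda]_{\tau}(t)-\partial_4 H[q,u,p,\lambda]_{\tau}(t+\tau)$ on $[t_1,t_2-\tau]$ and $\dot p(t)=-\partial_2 H[q,u,p,\lambda]_{\tau}(t)$ on $[t_2-\tau,t_2]$, integrated backwards from a terminal condition at $t_2$ (this is a linear delayed/advanced ODE with continuous coefficients, since $L,g,\varphi$ are $C^1$ and $q\in C^1$, $u\in C^0$, so $p$ exists and is piecewise $C^1$ by Picard). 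With this choice the integration by parts cancels every $h$-term and every boundary term in the first variation, leaving only $\int\bigl(\partial_3 H[q,u,p,\lambda]_{\tau}(t)+\partial_5 H[q,u,p,\lambda]_{\tau}(t+\tau)\bigr)\cdot k(t)\,dt=0$ (with the advanced term absent on $[t_2-\tau,t_2]$) for arbitrary continuous $k$, and only \emph{then} does the fundamental lemma give \eqref{eq:CE} and \eqref{eq:CE12}. The equations \eqref{eq:Hamnd}--\eqref{eq:Hamnd2} then hold by construction of $p$ and because $\partial_6 H=\varphi$. Note also that, with the coefficients merely continuous, $p$ is in general only piecewise $C^1$ across $t_2-\tau$ (where the advanced term $\partial_4 H(t+\tau)$ drops out), so the global $C^1$ regularity asserted in the statement is itself slightly optimistic; this is a defect of the quoted theorem rather than of your argument.
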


\begin{definition}
\label{scale:Pont:Ext} A triplet
$\left(q(\cdot),u(\cdot),p(\cdot)\right)$ satisfying the conditions
of Theorem~\ref{theo:pmpnd} is called an \emph{isopereimetric
Pontryagin extremal with time delay}.
\end{definition}

We define the notion of invariance for Problem
\eqref{Pond}--\eqref{ic} in terms of the Hamiltonian, by introducing
the augmented functional as in [3]:

\begin{equation}
\label{eq:pcond} \mathcal{J}[q(\cdot),u(\cdot),p(\cdot)] =
\int_{t_{1}}^{t_{2}}
\left[H(t,q(t),u(t),q(t-\tau),u(t-\tau),p(t))-p(t)\cdot\dot{q}(t)\right]dt
\end{equation}
subject to \eqref{ic}, where $H$ is given by \eqref{eq:Hnd11}. The
notion of invariance for \eqref{Pond}--\eqref{ci} is defined using
the invariance of \eqref{eq:pcond}.

\begin{definition}[\textrm{cf.} Definition~\ref{def:invndLIP}]
\label{def:invnd-co1} Consider the following $s$-parameter group of
infinitesimal transformations:
\begin{equation}
\label{eq:tinfnd}
\begin{cases}
\bar{t} = t + s\eta(t,q,u) + o(s),\\
\bar{q}(t) = q(t) + s\xi(t,q,u) + o(s),\\
\bar{u}(t) = u(t) + s\varrho(t,q,u) + o(s),\\
\bar{p}(t) = p(t) +s\varsigma(t,q,u) + o(s),
\end{cases}
\end{equation}
where $\eta \in C^1\left(\mathbb{R}^{1+n+m}, \mathbb{R}\right)$,
$\xi, \varsigma \in C^1\left(\mathbb{R}^{1+n+m},
\mathbb{R}^n\right)$, $\varrho \in C^0\left(\mathbb{R}^{1+n+m},
\mathbb{R}^m\right)$ are given functions. The functional
\eqref{eq:pcond} is said to be invariant under \eqref{eq:tinfnd} if
\begin{multline*}
\left.\frac{d}{ds}\right|_{s=0} \int_{\bar{t}(I)}
\Biggl[H\Biggl(t+s\eta,q+s\xi,u+s\varrho,q(t-\tau)+s\xi_{\tau}(t),u(t-\tau)+s\varrho_{\tau}(t),\\
-(p+s\varsigma)\cdot\frac{\dot{q}+s\dot{\xi}}{1+s\dot{\eta}}\Biggr)
(1+s\dot{\eta})\Biggr] dt = 0
\end{multline*}
for any  subinterval $I \subseteq [t_1,t_2]$, where
$\varrho_{\tau}(t)=\varrho(t-\tau,q(t-\tau),u(t-\tau))$ and\\
$\xi_{\tau}(t)=\xi\left(t-\tau,q(t-\tau),u(t-\tau)\right)$.
\end{definition}

\begin{definition}
\label{lco} A quantity $C(t,q(t),q(t-\tau),u(t),u(t-\tau),p(t))$,
constant for $t \in [t_1,t_2]$ along any isoperimetric Pontryagin
extremal with delay $(q(\cdot),u(\cdot),p(\cdot))$ of problem
\eqref{Pond}--\eqref{ic}, is said to be an \emph{isoperimetric
constant of motion with delay} for \eqref{Pond}--\eqref{ic}.
\end{definition}

Theorem~\ref{thm:NT:OC} gives a Noether-type theorem for
isoperimetric optimal control problems with time delay.

\begin{theorem}[Isoperimetric Noether symmetry theorem with time delay in Hamiltonian form]
\label{thm:NT:OC} If we have invariance in the sense of
Definition~\ref{def:invnd-co1}, then
\begin{multline}
\label{eq:tnnd-co} C(t,q(t),q(t-\tau),u(t),u(t-\tau),p(t))
= -p(t)\cdot\xi\left(t,q(t),u(t)\right)\\
+ H\left(t,q(t),u(t),q(t-\tau),u(t-\tau),p(t)\right)
\eta\left(t,q(t),u(t)\right)
\end{multline}
is an isoperimetric constant of motion with delay (\textrm{cf.}
Definition~\ref{lco}) for \eqref{Pond}--\eqref{ic}.
\end{theorem}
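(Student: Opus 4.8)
The plan is to deduce this theorem from the variational Noether theorem (Theorem~\ref{theo:tnnd}) by recasting the optimal control problem \eqref{Pond}--\eqref{ic} as an equivalent first-order isoperimetric variational problem. Concretely, I would read the augmented functional \eqref{eq:pcond} as a variational functional whose generalized coordinate is the vector $(q,u,p)$ and whose Lagrangian is
\begin{equation*}
\widetilde{F}(t,q,\dot q,q_\tau,\dot q_\tau,u,u_\tau,p):=H(t,q,u,q_\tau,u_\tau,p)-p\cdot\dot q,
\end{equation*}
with $H$ the isoperimetric Hamiltonian \eqref{eq:Hnd11}. The decisive structural feature is that $\widetilde{F}$ depends on a velocity only through the single term $-p\cdot\dot q$: no derivatives $\dot u$, $\dot p$, nor any delayed velocities $\dot q_\tau,\dot u_\tau,\dot p_\tau$ occur. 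Hence $\partial_{\dot q}\widetilde{F}=-p$ while the partial derivatives of $\widetilde{F}$ with respect to every remaining velocity vanish identically, a fact that will make the conservation law collapse to its stated form.

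First I would verify that the two notions of invariance coincide. Comparing the integrand of Definition~\ref{def:invnd-co1} with that of Definition~\ref{def:invndLIP}, the transformed expression inside the $s$-derivative is exactly $\widetilde{F}$ evaluated along the transformed curve times the Jacobian $1+s\dot\eta$, with zero gauge-term and with the extended generator $(\xi,\varrho,\varsigma)$ attached to the coordinate $(q,u,p)$. Thus invariance in the sense of Definition~\ref{def:invnd-co1} is precisely variational invariance of the functional built from $\widetilde{F}$. Next I would match the extremals: since $\widetilde{F}$ carries no $\dot p$, the Euler--Lagrange equation in the $p$-slot is algebraic and reads $\partial_6 H-\dot q=0$, recovering the state equation in \eqref{eq:Hamnd}--\eqref{eq:Hamnd2}; the $u$-slot yields $\partial_3 H(t)+\partial_5 H(t+\tau)=0$, i.e.\ the stationary conditions \eqref{eq:CE}--\eqref{eq:CE12}; and the $q$-slot yields $\tfrac{d}{dt}(-p)=\partial_2 H(t)+\partial_4 H(t+\tau)$, i.e.\ the adjoint equations \eqref{eq:Hamnd}--\eqref{eq:Hamnd2}. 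Hence the Pontryagin extremals of Theorem~\ref{theo:pmpnd} are exactly the delayed Euler--Lagrange extremals of $\widetilde{F}$.

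With the extremals identified, I would apply Theorem~\ref{theo:tnnd} to $\widetilde{F}$. Because $\partial_{\dot q}\widetilde{F}=-p$ and all other velocity-derivatives are zero, the quantity $\partial_3\widetilde F(t)+\partial_5\widetilde F(t+\tau)$ reduces to $-p(t)$ on both subintervals, and the Noether quantity \eqref{eq:tnnd}--\eqref{eq:Noeth} collapses to
\begin{equation*}
-\Phi+(-p)\cdot\xi+\bigl(\widetilde{F}+p\cdot\dot q\bigr)\eta=-p\cdot\xi+H\,\eta,
\end{equation*}
using $\widetilde{F}+p\cdot\dot q=H$ and $\Phi\equiv 0$. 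This is precisely \eqref{eq:tnnd-co}, so $C$ is a constant of motion along the Pontryagin extremals.

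The main obstacle is discharging the hypotheses that Theorem~\ref{theo:tnnd} imposes on its extremals, namely that they also be delayed DuBois--Reymond extremals satisfying the compatibility condition \eqref{CDUR}. For $\widetilde{F}$ one computes $\widetilde{F}-\dot q\cdot\partial_{\dot q}\widetilde{F}=H$, so the delayed DuBois--Reymond condition is equivalent to $\tfrac{d}{dt}H=\partial_1 H$ along the extremal. A careful differentiation of $H$ together with the Hamiltonian and stationary conditions shows that $\tfrac{d}{dt}H-\partial_1 H$ reduces to $\Psi(t)-\Psi(t+\tau)$, where $\Psi(t)=\partial_4 H(t)\cdot\dot q(t-\tau)+\partial_5 H(t)\cdot\dot u(t-\tau)$; this in turn vanishes precisely when $\partial_4 H(t+\tau)\cdot\dot q(t)+\partial_5 H(t+\tau)\cdot\dot u(t)=0$, which is the exact translation of \eqref{CDUR} into the Hamiltonian variables. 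I expect the delicate point to be tracking this delayed term consistently across the two subintervals $[t_1,t_2-\tau]$ and $[t_2-\tau,t_2]$ and confirming that it is this condition, and no stronger one, that secures $\tfrac{d}{dt}C=0$.
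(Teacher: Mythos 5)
Your proposal is exactly the paper's argument: the paper's entire proof is the single sentence that \eqref{eq:tnnd-co} ``is obtained by applying Theorem~\ref{theo:tnnd} to problem \eqref{eq:pcond}'', and you carry out precisely that reduction, supplying the details (matching of invariance notions, identification of Pontryagin extremals with the delayed Euler--Lagrange extremals of $H-p\cdot\dot q$, and the collapse of the Noether quantity via $\partial_{\dot q}(H-p\cdot\dot q)=-p$) that the paper leaves implicit. Your closing discussion of how the DuBois--Reymond hypothesis and condition \eqref{CDUR} of Theorem~\ref{theo:tnnd} must be discharged is in fact more careful than the paper, which silently omits this point.
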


\begin{proof}
The constant of motion with delay \eqref{eq:tnnd-co} is obtained by
applying Theorem~\ref{theo:tnnd} to problem \eqref{eq:pcond}.
\end{proof}

\begin{remark}
The constant of motion with time delay \eqref{eq:tnnd-co} has the
same expression in the two intervals $t_{1}\leq t\leq t_{2}-\tau$
and $t_{2}-\tau\leq t\leq t_{2}$.
\end{remark}

\begin{remark}
For the isoperimetric problem of the calculus of variations
\eqref{Pe}--\eqref{Pe2}, the Hamiltonian \eqref{eq:Hnd11} takes the
form $H = L + p \cdot u$, with $u = \dot{q}$ and $p(t) = -\partial_3
L[q]_\tau(t)-\partial_5 L[q]_\tau(t+\tau)$. In this case the
isoperimetric constant of motion with delay \eqref{eq:tnnd-co}
reduces to \eqref{eq:tnnd} in the interval $t_{1}\leq t\leq
t_{2}-\tau$ and to \eqref{eq:Noeth} in the interval $t_{2}-\tau\leq
t\leq t_{2}$ with $\Phi\equiv 0\,.$
\end{remark}

 \begin{corollary}(Isoperimetric Noether's theorem
 for problems of the calculus of variations with second-order
 derivatives) \label{eq:H142}
 For the second-order problem of the calculus of variations
\eqref{eq:pcvo2}, the isoperimetric constant of motion with delay
\eqref{eq:tnnd-co} is equivalent to

\begin{multline}
\label{eq:H14} F[q]^2_{\tau}(t)\tau+\left(
\partial_3F[q]^2_{\tau}(t)+\partial_6F[q]^2_{\tau}(t+\tau)
-\frac{d}{dt}\left(\partial_4F[q]^2_{\tau}(t)+\partial_7F[q]^2_{\tau}(t+\tau)\right)\right)
\cdot (\xi_{0}-\dot{q}\tau)\\
+\left(\partial_4F[q]^2_{\tau}(t)+\partial_7F[q]^2_{\tau}(t+\tau)\right)
\cdot (\xi_{1}-\ddot{q}\tau)
\end{multline}
for $t_{1}\leq t\leq t_{2}-\tau$, and
\begin{equation}
\label{eq:H141} F[q]^2_{\tau}(t)\tau+\left(
\partial_3F[q]^2_{\tau}(t)
-\frac{d}{dt}\partial_4F[q]^2_{\tau}(t)\right) \cdot
(\xi_{0}-\dot{q}\tau) +\partial_4F[q]^2_{\tau}(t) \cdot
(\xi_{1}-\ddot{q}\tau)
\end{equation}
for $t_{2}-\tau\leq t\leq t_{2}$
 \end{corollary}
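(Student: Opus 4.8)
The plan is to read \eqref{eq:pcvo2} as the optimal control problem displayed in the remark preceding this corollary: set the state $(q^{0},q^{1})=(q,\dot q)$, the control $u=\ddot q$, the velocity $\varphi=(q^{1},u)$, and take for $F$ the augmented Lagrangian $L-\lambda\cdot g$. Writing the costate as $p=(p_{0},p_{1})$, the associated isoperimetric Hamiltonian \eqref{eq:Hnd11} becomes $H=F+p_{0}\cdot q^{1}+p_{1}\cdot u$. Theorem~\ref{thm:NT:OC} then applies and produces the constant of motion \eqref{eq:tnnd-co}, namely $C=-p\cdot\xi+H\eta$ with $\xi=(\xi_{0},\xi_{1})$. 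Thus the whole proof reduces to expressing $p_{0}$ and $p_{1}$ through $F$ and its partial derivatives and then rearranging $C$.

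To identify the costates I would invoke the isoperimetric Pontryagin maximum principle (Theorem~\ref{theo:pmpnd}) for this reformulated problem, carefully tracking the index shift caused by the extra control variable: the argument $u$ of $H$ corresponds to the slot $\partial_{4}$ of $F[q]^2_{\tau}$, while $q^{1}$ corresponds to $\partial_{3}$ and $q^{0}$ to $\partial_{2}$ (with $\partial_{5},\partial_{6},\partial_{7}$ the delayed counterparts). On $t_{1}\le t\le t_{2}-\tau$, the stationary condition \eqref{eq:CE} gives $p_{1}(t)=-\bigl(\partial_{4}F[q]^2_{\tau}(t)+\partial_{7}F[q]^2_{\tau}(t+\tau)\bigr)$, and the adjoint equation for the $q^{1}$-costate in \eqref{eq:Hamnd} reads $\dot p_{1}=-\bigl(\partial_{3}F[q]^2_{\tau}(t)+p_{0}+\partial_{6}F[q]^2_{\tau}(t+\tau)\bigr)$; solving for $p_{0}$ and using $\dot p_{1}=-\tfrac{d}{dt}\bigl(\partial_{4}F[q]^2_{\tau}(t)+\partial_{7}F[q]^2_{\tau}(t+\tau)\bigr)$ yields $-p_{0}=\partial_{3}F[q]^2_{\tau}(t)+\partial_{6}F[q]^2_{\tau}(t+\tau)-\tfrac{d}{dt}\bigl(\partial_{4}F[q]^2_{\tau}(t)+\partial_{7}F[q]^2_{\tau}(t+\tau)\bigr)$. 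On $t_{2}-\tau\le t\le t_{2}$ the delayed terms drop out by \eqref{eq:CE12} and \eqref{eq:Hamnd2}, leaving $p_{1}(t)=-\partial_{4}F[q]^2_{\tau}(t)$ and $-p_{0}=\partial_{3}F[q]^2_{\tau}(t)-\tfrac{d}{dt}\partial_{4}F[q]^2_{\tau}(t)$.

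Finally I would substitute $H=F+p_{0}\dot q+p_{1}\ddot q$ into $C=-p\cdot\xi+H\eta$ and regroup the costate-weighted terms into the combinations $\xi_{0}-\dot q\,\eta$ and $\xi_{1}-\ddot q\,\eta$, obtaining $C=F[q]^2_{\tau}(t)\,\eta+(-p_{0})\cdot(\xi_{0}-\dot q\,\eta)+(-p_{1})\cdot(\xi_{1}-\ddot q\,\eta)$. Inserting the two pairs of costate expressions found above gives exactly \eqref{eq:H14} on $t_{1}\le t\le t_{2}-\tau$ and \eqref{eq:H141} on $t_{2}-\tau\le t\le t_{2}$ (the time generator being written as $\tau$ in the statement). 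I expect the only delicate points to be the bookkeeping of the argument-index shift between $F$ and the Hamiltonian operator $[\cdot]_{\tau}$ and the clean separation of the two time-intervals; the differentiation of $p_{1}$ to recover $p_{0}$ is where the term $\tfrac{d}{dt}(\partial_{4}F[q]^2_{\tau}(t)+\partial_{7}F[q]^2_{\tau}(t+\tau))$ enters and must be matched against the coefficient of $\xi_{0}-\dot q\,\eta$ in \eqref{eq:H14}.
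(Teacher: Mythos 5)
Your proposal is correct and follows essentially the same route as the paper: recast the second-order problem as the optimal control problem from the preceding remark, apply Theorem~\ref{thm:NT:OC} to get $C=-p\cdot\xi+H\eta$, identify the costates $p_0,p_1$ via the stationary and adjoint equations of Theorem~\ref{theo:pmpnd}, and substitute. The only differences are cosmetic: you treat both intervals and keep the signs of $p_0,p_1$ consistent throughout, whereas the paper writes out only the interval $t_2-\tau\le t\le t_2$ and is loose with signs (immaterial for a conserved quantity).
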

\begin{proof} For simplicity, we only prove the corollary in the
interval $t_{2}-\tau\leq t\leq t_{2}\,.$ For the problem of the
calculus of variations with second-order derivatives, one has
\begin{gather*}
H\left(t,q^{0}(t),q^{1}(t),u(t),q^{0}(t-\tau),q^{1}(t-\tau),u(t-\tau),p^{0},p^{1}\right)\\=
F(t,q^{0},q^{1},u,q^{0}(t-\tau),q^{1}(t-\tau),u(t-\tau))+p^{0}
q^{1}+p^{1}u\,,
\end{gather*}
and
\begin{equation*}
\begin{cases}
q^{0}(t)=q(t)\\
q^{1}(t)=\dot{q}(t)\\
u(t)=\ddot{q}(t)\\
q^{0}(t-\tau)=q(t-\tau)\\
q^{1}(t-\tau)=\dot{q}(t-\tau)\\
u(t-\tau)=\ddot{q}(t-\tau)
\end{cases}
\end{equation*}

 Using these equalities, it follows from the isoperimetric
Pontryagin Maximum Principle (Theorem~\ref{theo:pmpnd}) that
\begin{gather*}
\frac{\partial H}{\partial u}=0\Leftrightarrow
{p}^{1}=\frac{\partial F}{\partial \ddot{q}} \, ,\\
\dot{p}^{0}=-\frac{\partial H}{\partial q^{0}}=\frac{\partial
F}{\partial q} \, ,\\
\dot{p}^{1}=-\frac{\partial H}{\partial q^{1}} \Leftrightarrow
p^{0}=\frac{\partial F}{\partial \dot{q}}-\frac{d}{dt}\frac{\partial
F}{\partial \ddot{q}} \, .
\end{gather*}
In this case, the constant of motion \eqref{eq:tnnd-co} takes the
form
\begin{equation*}
C={\mathcal{H} }\tau-p^{0} \cdot \xi_{0}-p^{1} \cdot \xi_{1} \, ,
\end{equation*}
and substituting $H$, $p^{0}$ and $p^{1}$ by its expressions, the
intended result is obtained.
\end{proof}

\begin{remark} We can easily verify that for $m=2$ the quantities
\eqref{eq:H14} and \eqref{eq:H141} coincide with \eqref{eq:TeNetm}
and \eqref{eq:TeNetm1}, respectively, in the case $\Phi\equiv 0\,.$
\end{remark}


\section{Illustrative examples}\label{exe}

In this section we consider two examples where the isoperimetric
problems do not depend explicitly on the independent variable $t$
(autonomous case).

\begin{example}
Consider the second-order isoperimetric problem of the calculus of
variations with time delay
\begin{equation}
\label{eq:ex}
\begin{gathered}
J^{2}_1[q(\cdot)]=\int_0^2\left(\ddot{q}(t)+\ddot{q}(t-1)\right)^2dt \longrightarrow \min,\\
q(t)=-t^4 \, ,~-1\leq t\leq 0,\quad \dot{q}(2)=-32, \quad q(2)=-14,
\end{gathered}
\end{equation}
subject to  isoperimetric equality constraints
\begin{equation}
\label{CT5} I^{1}[q(\cdot)]=\int_{0}^{2}
\left(\dot{q}+\dot{q}(t-1)\right)^2dt=l
\end{equation}
in the class of functions $q(\cdot)\in
Lip\left([-1,2];\mathbb{R}\right)$. For this example, the augmented
Lagrangian $F$ is given as
\begin{equation}\label{}
    F=\left(\ddot{q}(t)+\ddot{q}(t-\tau)\right)^2
    -\lambda\left(\dot{q}+\dot{q}(t-1)\right)^2 \,.
\end{equation}
From Corollary~\ref{cor:16} with $m=2$, one obtains that any
solution to problem \eqref{eq:ex}-\eqref{CT5} must satisfy
\begin{equation}
\label{eq:ex:EL1}
2q^{(iv)}(t)+q^{(iv)}(t-1)+q^{(iv)}(t+1)+2\lambda\left(\ddot{q}(t)+\ddot{q}(t-1)+\ddot{q}(t+1)\right)=0
, \quad 0\leq t\leq 1,
\end{equation}
\begin{equation}
\label{eq:ex:EL2}
q^{(iv)}(t)+q^{(iv)}(t-1)+\lambda\left(\ddot{q}(t)+\ddot{q}(t-1)\right)=0,
\quad 1\leq t\leq 2\,.
\end{equation}
 Because problem
\eqref{eq:ex}--\eqref{CT5} is autonomous, we have invariance, in the
sense of Definition~\ref{def:invaifm}, with $\eta\equiv 1$ and
$\xi\equiv 0$. Simple calculations show that isoperimetric Noether's
constant of motion with time delay \eqref{eq:H14}--\eqref{eq:H141}
coincides with the DuBois--Reymond condition
\eqref{eq:DBRordm}--\eqref{eq:DBRordm:2} with $m=2$:
\begin{multline}
\label{eq:ex:DBR1} \left(\ddot{q}(t)+\ddot{q}(t-\tau)\right)^2
    -\lambda\left(\dot{q}+\dot{q}(t-1)\right)^2\\
    +2\dot{q}(t)\left[\lambda\left(2\dot{q}(t)+\dot{q}(t-1)+\dot{q}(t+1)\right)
    +2q^{(iii)}(t)+q^{(iii)}(t-1)+q^{(iii)}(t+1)\right]\\
    -2\ddot{q}(t)\left(2\ddot{q}(t)+\ddot{q}(t-1)+\ddot{q}(t+1)\right)=c_1,\quad
0\leq t\leq 1,
\end{multline}

\begin{multline}\label{eq:ex:DBR22}
\left(\ddot{q}(t)+\ddot{q}(t-\tau)\right)^2
    -\lambda\left(\dot{q}+\dot{q}(t-1)\right)^2
    +2\dot{q}(t)\left[\lambda\left(\dot{q}(t)+\dot{q}(t-1))\right)
    +q^{(iii)}(t)+q^{(iii)}(t-1)\right]\\
    -2\ddot{q}(t)\left(\ddot{q}(t)+\ddot{q}(t-1)\right)=c_2,\quad 1\leq t\leq 2,
\end{multline}
where $c_1$ and $c_2$ are constants.

One can easily check that function  $q(\cdot)\in
Lip\left([-1,2];\mathbb{R}^n\right)$ defined by
\begin{equation}
\label{eq:ext:ex:22b} q(t)=
\begin{cases}
-t^4 & ~\textnormal{for}~ -1< t\leq 0\\
t^4 & ~\textnormal{for}~ 0< t\leq 1\\
-t^4+2 & ~\textnormal{for}~ 1< t\leq 2
\end{cases}
\end{equation}
is an isoperimetric Euler--Lagrange extremal, i.e., satisfies
\eqref{eq:ex:EL1}--\eqref{eq:ex:EL2} and is also a isoperimetric
DuBois--Reymond extremal, i.e., satisfies
\eqref{eq:ex:DBR1}--\eqref{eq:ex:DBR22}. Corollary~\ref{eq:H142}
asserts the validity of Noether's constant of motion, which is here
 verified: \eqref{eq:H14}--\eqref{eq:H141} holds along
\eqref{eq:ext:ex:22b} with  $\eta\equiv 1$, and $\xi\equiv 0$.

\end{example}

\begin{example}
\label{ex:2} Let us consider an isoperimetric autonomous optimal
control problem with time delay, \textrm{i.e.}, the situation when
$L$, $\varphi$ and $g$ in \eqref{Pond}--\eqref{CT2} do not depend
explicitly on $t$. In this case one has invariance, in the sense of
Definition~\ref{def:invnd-co1}, for $\eta \equiv 1$ and $\xi =
\varrho = \varsigma \equiv 0$. It follows from
Theorem~\ref{thm:NT:OC} that
\begin{equation}
\label{ceg} H\left(q(t),u(t),q(t-\tau),u(t-\tau),p(t)\right) =
\text{constant}
\end{equation}
along any isoperimetric Pontryagin extremal with delay
$(q(\cdot),u(\cdot),p(\cdot))$ of the problem. In the language of
mechanics \eqref{ceg} is called  conservation of energy.
\end{example}




\begin{thebibliography}{xx}

\bibitem{Bartos}
Z. Bartosiewicz\ and\ D. F. M. Torres, Noether's theorem on time
scales, J. Math. Anal. Appl. {\bf 342} (2008), no.~2, 1220--1226.
{\tt arXiv:0709.0400}

\bibitem{Basin:book}
M. Basin, {\it New trends in optimal filtering and control for
polynomial and time-delay systems}, Lecture Notes in Control and
Information Sciences, 380, Springer, Berlin, 2008.

\bibitem{Bok}
G. V. Bokov, Pontryagin's maximum principle in a problem with time
delay, J. Math. Sci. (N. Y.) {\bf 172} (2011), no.~5, 623--634.

\bibitem{ChiLoi} J. Chiasson and J. J. Loiseau (Eds.)
{\it Applications of Time Delay Systems}
Springer-Verlag Berlin Heidelberg, 2007.

\bibitem{Gastao:PhD:thesis}
G. S. F. Frederico, Generalizations of Noether's theorem in the
calculus of variations and optimal control, Ph.D. thesis, University
of Cape Verde, 2009.

\bibitem{GF:JMAA:07}
G. S. F. Frederico\ and\ D. F. M. Torres, A formulation of Noether's
theorem for fractional problems of the calculus of variations, J.
Math. Anal. Appl. {\bf 334} (2007), no.~2, 834--846. {\tt
arXiv:math/0701187}

\bibitem{GF2012}
G. S. F. Frederico\ and\ D. F. M. Torres, Noether's symmetry theorem
for variational and optimal control problems with time delay, Numer.
Algebra Control Optim. {\bf 2} (2012), no.~3, 619--630. {\tt
arXiv:1203.3656}

\bibitem{GF2013}
G. S. F. Frederico, T. Odzijewicz \ and\ D. F. M. Torres, Noether's
Theorem for Nonsmooth Extremals of Variational Problems with Time
Delay, Applicable Analysis {\bf 93} (2014), no.~1, 153--170.

\bibitem{GFML2016}
G. S. F. Frederico\ and\ M. J. Lazo, Constant of motion for Nonsmooth 
Extremals of Time Delay Isoperimetric Variational Problems, arXiv:1603.02335.

\bibitem{EFridman} E. Fridman,
{\it Introduction to Time-Delay Systems}
Springer International Publishing,
Springer International Publishing Switzerland, 2014

\bibitem{CD:Gel:1963}
I. M. Gelfand\ and\ S. V. Fomin: {\em Calculus of variations\/},
Prentice-Hall, Englewood Cliffs, N.J. 1963.

\bibitem{GoKeMa}
L. G\"ollmann, D. Kern\ and\ H. Maurer, Optimal control problems
with delays in state and control variables subject to mixed
control-state constraints, Optimal Control Appl. Methods {\bf 30}
(2009), no.~4, 341--365.

\bibitem{MR2323264}
P. D. F. Gouveia, D. F. M. Torres\ and\ E. A. M. Rocha, Symbolic
computation of variational symmetries in optimal control, Control
Cybernet. {\bf 35} (2006), no.~4, 831--849. {\tt arXiv:math/0604072}

\bibitem{DH:1968}
D. K. Hughes, Variational and optimal control problems with delayed
argument, J. Optimization Theory Appl. {\bf 2} (1968), 1--14.

\bibitem{Jost:book}
J. Jost\ and\ X. Li-Jost, {\it Calculus of variations}, Cambridge
Studies in Advanced Mathematics, 64, Cambridge Univ. Press,
Cambridge, 1998.

\bibitem{Kra}
G. L. Kharatishvili, A maximum principle in extremal problems with
delays, in {\it Mathematical Theory of Control (Proc. Conf., Los
Angeles, Calif., 1967) }, 26--34, Academic Press, New York, 1967.

\bibitem{Kharatishvili}
G. L. Kharatishvili\ and\ T. A. Tadumadze, Formulas for the
variation of a solution and optimal control problems for
differential equations with retarded arguments, J. Math. Sci. (N.
Y.) {\bf 140} (2007), no.~1, 1--175.

\bibitem{MR2761345}
Y. Kosmann-Schwarzbach, {\it The Noether theorems}, translated,
revised and augmented from the 2006 French edition by Bertram E.
Schwarzbach, Sources and Studies in the History of Mathematics and
Physical Sciences, Springer, New York, 2011.

\bibitem{book:frac}
A. B. Malinowska\ and\ D. F. M. Torres, Introduction to the
fractional calculus of variations, Imperial College Press, London \&
World Scientific Publishing, Singapore, 2012.

\bibitem{NataliaNoether}
N. Martins\ and\ D. F. M. Torres, Noether's symmetry theorem for
nabla problems of the calculus of variations, Appl. Math. Lett. {\bf
23} (2010), no.~12, 1432--1438. {\tt arXiv:1007.5178}

\bibitem{Noether:1918} E. Noether, {\it Invariante Variationsprobleme},
Nachr. d. König. Gesellsch. d. Wiss. zu Göttingen, Math-phys.
Klasse, 235–-257 (1918). English translation by M. A. Tavel,
Transport Theory and Statistical Physics, {\bf 1} (1971), no.~3,
183–-207.

\bibitem{CD:MR29:3316b} (MR0166037)
\newblock L. S. Pontryagin, V. G. Boltyanskii,
R. V. Gamkrelidze\ and\ E. F. Mishchenko,
\newblock ``The mathematical theory of optimal processes'',
\newblock Translated from the Russian by K. N. Trirogoff;
edited by L. W. Neustadt Interscience Publishers John Wiley \& Sons,
Inc.\, New York, 1962.

\bibitem{Ros}
J. F. Rosenblueth, Systems with time delay in the calculus of
variations: the method of steps, IMA J. Math. Control Inform. {\bf
5} (1988), no.~4, 285--299.

\bibitem{ejc}
D. F. M. Torres, On the Noether theorem for optimal control, Eur. J.
Control {\bf 8} (2002), no.~1, 56--63.

\bibitem{Torres:proper}
D. F. M. Torres, Proper extensions of Noether's symmetry theorem for
nonsmooth extremals of the calculus of variations, Commun. Pure
Appl. Anal. {\bf 3} (2004), no.~3, 491--500.

\bibitem{Troutman:book}
J. L. Troutman,
 Variational calculus and optimal control, second edition,
Undergraduate Texts in Mathematics, Springer, New York, 1996.

\bibitem{Bruce:book} B. Van Brunt,The calculus of variations, Springer, New York, 2004.


\end{thebibliography}
\end{document}